\newtheorem{theorem}{Theorem}[section]
\newtheorem{lemma}[theorem]{Lemma}
\newtheorem{proposition}[theorem]{Proposition}
\newtheorem{corollary}[theorem]{Corollary}
\theoremstyle{definition}
\theoremstyle{remark}
\newtheorem{remark}[theorem]{Remark}
\numberwithin{equation}{section}
\begin{document}

\title [Improvements  of Berezin number inequalities]{Improvements  of Berezin number inequalities}

\author[R. Lashkaripour, M. Bakherad, M. Hajmohamadi  ]{ Monire Hajmohamadi$^1$, Rahmatollah Lashkaripour$^2$ and Mojtaba Bakherad$^3$}

\address{$^1$$^{,2}$$^{,3}$ Department of Mathematics, Faculty of Mathematics, University of Sistan and Baluchestan, Zahedan, I.R.Iran.}

\email{$^1$monire.hajmohamadi@yahoo.com}

\email{$^{2}$lashkari@hamoon.usb.ac.ir}

\email{$^{3}$mojtaba.bakherad@yahoo.com; bakherad@member.ams.org}

\subjclass[2010]{Primary 47A30,  Secondary 15A60, 30E20, 47A12 }

\keywords{Berezin number, Berezin symbol, Heinz means, Off-diagonal part, Operator matrix, Positive operator}
\begin{abstract}
In this paper, we generalize several Berezin number inequalities involving  product of operators, which acting on a Hilbert space $\mathscr H(\Omega)$. Among other inequalities, it is shown that if $A, B$ are positive operators and $X$ is any operator, then
\begin{align*}
\textbf{ber}^{r}(H_{\alpha}(A,B))&\leq\frac{\|X\|^{r}}{2}\textbf{ber}(A^{r}+B^{r})\\&\leq\frac{\|X\|^{r}}{2}\textbf{ber}(\alpha A^{r}+(1-\alpha)B^{r})+\textbf{ber}((1-\alpha)A^{r}+\alpha B^{r}),
\end{align*}
where $H_{\alpha}(A,B)=\frac{A^\alpha XB^{1-\alpha}+A^{1-\alpha} XB^{\alpha}}{2}$, $0\leq\alpha\leq1$ and $r\geq2$.
\end{abstract} \maketitle
\section{Introduction}
Let ${\mathbb B}(\mathscr H)$ denote the $C^{*}$-algebra of all bounded linear operators on a complex Hilbert space ${\mathscr H}$ with an inner product $ \langle\, .\,,\, .\,\rangle $ and the corresponding norm $ \| \,.\, \| $. In the case when ${\rm dim}{\mathscr
H}=n$, we identify ${\mathbb B}({\mathscr H})$ with the matrix
algebra $\mathbb{M}_n$ of all $n\times n$ matrices with entries in
the complex field. An operator $A\in{\mathbb B}({\mathscr H})$ is called positive
if $\langle Ax,x\rangle\geq0$ for all $x\in{\mathscr H }$, and then we write $A\geq0$.  \\
  A functional Hilbert space $\mathscr H=\mathscr H(\Omega)$ is a Hilbert space of complex valued functions on a (nonempty) set $\Omega$, which has the property that point evaluations are continuous i.e. for each $\lambda\in \Omega$ the map $f\mapsto f(\lambda)$ is a continuous linear functional on $\mathscr H$. The Riesz representation theorem ensure that for each $\lambda\in \Omega$ there is a unique element $k_{\lambda}\in \mathscr H$ such that $f(\lambda)=\langle f,k_{\lambda}\rangle$ for all $f\in \mathscr H$. The collection $\{k_{\lambda} : \lambda\in \Omega\}$ is called the reproducing kernel of $\mathscr H$. If $\{e_{n}\}$ is an orthonormal basis for a functional Hilbert space $\mathscr H$, then the reproducing kernel of $\mathscr H$ is given by $k_{\lambda}(z)=\sum_n\overline{e_{n}(\lambda)}e_{n}(z)$; (see \cite[problem 37]{ando}). For $\lambda\in \Omega$, let $\hat{k_{\lambda}}=\frac{k_{\lambda}}{\|k_{\lambda}\|}$ be the normalized reproducing kernel of $\mathscr H$. For a bounded linear operator $A$ on $\mathscr H$, the function $\widetilde{A}$ defined on $\Omega$ by $\widetilde{A}(\lambda)=\langle A\hat{k_{\lambda}},\hat{k_{\lambda}}\rangle$ is the Berezin symbol of $A$, which firstly have been introduced by Berezin \cite{Ber1,Ber2}. Berezin set and Berezin number of the operator A are defined by
 \begin{align*}
 \textbf{Ber}(A):=\{\widetilde{A}(\lambda): \lambda\in \Omega\} \qquad \textrm{and} \qquad \textbf{ber}(A):=\sup\{|\widetilde{A}(\lambda)|: \lambda\in\Omega\},
 \end{align*}
respectively,(see \cite{kar}).
The numerical radius of $T\in\mathbb B({\mathscr H})$ is defined by
$w(A):=\sup\{| \langle Ax, x\rangle| : x\in {\mathscr H}, \Vert x \Vert=1\}.$
It is clear that
\begin{align}\label{111}
\textbf{ber}(A)\leq w(A)\leq\|A\|
\end{align}
for all $A\in \mathbb B(\mathscr H)$.
 Moreover, Berezin number of an operator $A$ satisfies the following properties:\\
$(\rm i)\textbf{ ber}(\alpha A)=|\alpha|\textbf{ber}(A)$ for all $\alpha\in \mathbb C$.\\
$(\rm ii)$ $\textbf{ber}(A+B)\leq \textbf{ber}(A)+\textbf{ber}(B)$.\\
Let $T_{i}\in \mathbb B(\mathscr H)\,\,(1\leq i\leq n)$. The generalized Euclidean Berezin number of $T_{1},...,T_{n}$ is defined in \cite{Ba} as follows
\begin{align*}
\textbf{ber}_{p}(T_{1},...,T_{n}):=\sup_{\lambda\in\Omega}\left(\sum_{i=1}^{n}\left|\langle T_{i}\hat{k}_{\lambda},\hat{k}_{\lambda}\rangle \right|^{p}\right)^{\frac{1}{p}},
\end{align*}
which has the following properties:\\
$(\rm a) \textbf{ ber}_{p}(\alpha T_{1},...,\alpha T_{n})=|\alpha|\textbf{ber}_{p}(T_{1},...,T_{n})$ for all $\alpha\in \mathbb C$;\\
$(\rm b) \textbf{ ber}_{p}(T_{1}+S_{1},...,T_{n}+S_{n})\leq\textbf{ber}_{p}(T_{1},...,T_{n})+\textbf{ber}_{p}(S_{1},...,S_{n})$,\\
where $T_{i},S_{i}\in \mathbb B(\mathscr H(\Omega)) (1\leq i\leq n)$.\\

Namely, the Berezin symbol have been investigated in detail for the Toeplitz and Hankel operators on the Hardy and Bergman spaces; it is widely applied in the various questions of analysis and uniquely determines the operator (i.e., for all $\lambda\in \Omega,  \widetilde{A}(\lambda)=\widetilde{B}(\lambda)$ implies $A=B$). For further information about Berezin symbol we refer the
reader to \cite{Ba,kar1,kar2,Nor} and references therein.\\
In \cite{Ba1}, the author showed some  Berezin number inequalities as follows:
\begin{align}\label{1}
\textbf{ber}(A^{*}XB)\leq\frac{1}{2}\textbf{ber}(B^*|X|B+A^*|X^*|A),
\end{align}
\begin{align*}
\textbf{ber}(AX\pm XA)\leq \textbf{ber}^{\frac{1}{2}}(A^*A+AA^*)\textbf{ber}^{\frac{1}{2}}(X^*X+XX^*),
\end{align*}
and
\begin{align}\label{4}
\textbf{ber}(A^{*}XB+B^*YA)\leq2\sqrt{\|X\|\|Y\|}\textbf{ber}^{\frac{1}{2}}(B^*B)\textbf{ber}^{\frac{1}{2}}(AA^*)
\end{align}
for any $A,B,X,Y\in {\mathbb B}({\mathscr H}(\Omega)).$\\
In this paper, we would like to state more extensions of Berezin number inequalities. Moreover, we obtain several Berezin number inequalities for $2\times 2$ operator matrices. For this goal we will apply some methods from \cite{haj}.
\section{main results}
To prove our Berezin number inequalities, we need several well known lemmas.\\
The following lemma is a simple consequence of the classical Jensen and Young inequalities (see \cite{har}).
 \begin{lemma}\label{3'}
 Let $a,b\geq0$, $0\leq\alpha\leq1$ and $p,q>1$ such that $\frac{1}{p}+\frac{1}{q}=1$. Then\\
 $\rm{(a)}$ $a^\alpha b^{1-\alpha}\leq\alpha a+(1-\alpha)b\leq(\alpha a^r+(1-\alpha)b^r)^{\frac{1}{r}}$ for $r\geq1$;\\
 $\rm{(b)}$ $ab\leq\frac{a^p}{p}+\frac{b^q}{q}\leq(\frac{a^{pr}}{p}+\frac{b^{qr}}{q})^{\frac{1}{r}}$
 for $r\geq1$.
 \end{lemma}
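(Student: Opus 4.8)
The plan is to treat each of the two chains as a pair of independent steps, every one of which rests on a single textbook convexity fact. For part (a), the left-hand inequality $a^{\alpha}b^{1-\alpha}\leq\alpha a+(1-\alpha)b$ is the weighted arithmetic--geometric mean inequality, and I would derive it from the concavity of the logarithm. Applying Jensen's inequality to the concave function $\log$ at the points $a,b$ with weights $\alpha,1-\alpha$ gives $\log(\alpha a+(1-\alpha)b)\geq\alpha\log a+(1-\alpha)\log b=\log(a^{\alpha}b^{1-\alpha})$, and exponentiating yields the claim. The degenerate cases $a=0$ or $b=0$ I would dispatch directly, since then the geometric-mean side either vanishes or collapses to a single surviving term.

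For the right-hand inequality in (a), namely $\alpha a+(1-\alpha)b\leq(\alpha a^{r}+(1-\alpha)b^{r})^{1/r}$ with $r\geq1$, I would invoke the convexity of $t\mapsto t^{r}$ on $[0,\infty)$. Jensen's inequality for this convex function gives $(\alpha a+(1-\alpha)b)^{r}\leq\alpha a^{r}+(1-\alpha)b^{r}$, and taking the monotone increasing $r$-th root of both sides completes the step. The two steps together establish (a).

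For part (b), I would reduce everything to part (a) via the change of variables dictated by the conjugacy relation $\frac1p+\frac1q=1$, that is, setting $\alpha=\frac1p$ so that $1-\alpha=\frac1q$. The left inequality $ab\leq\frac{a^{p}}{p}+\frac{b^{q}}{q}$ is then exactly the left inequality of (a) applied to the numbers $a^{p}$ and $b^{q}$, since $ab=(a^{p})^{1/p}(b^{q})^{1/q}$; this is classical Young's inequality. For the right inequality, I would apply the right inequality of (a) verbatim with $a$ replaced by $a^{p}$, $b$ replaced by $b^{q}$, and $\alpha=\frac1p$, which produces $\frac{a^{p}}{p}+\frac{b^{q}}{q}\leq\bigl(\frac1p a^{pr}+\frac1q b^{qr}\bigr)^{1/r}$, precisely the asserted bound.

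Since each of the four inequalities is a one-line application of a standard convexity statement, I do not anticipate a genuine obstacle here. The only points requiring a little care are the boundary cases $a=0$ or $b=0$, where the logarithm argument breaks down and must be handled separately, and the observation that it is the monotonicity of $t\mapsto t^{1/r}$ that permits passage from the $r$-th-power form of Jensen's inequality to the stated root form.
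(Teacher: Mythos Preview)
Your proposal is correct and matches the paper's own treatment: the paper does not prove this lemma but merely states it as ``a simple consequence of the classical Jensen and Young inequalities'' with a reference to Hardy--Littlewood--P\'olya, which is exactly the route you take. Your reduction of (b) to (a) via the substitution $\alpha=\tfrac1p$, $a\mapsto a^{p}$, $b\mapsto b^{q}$ is a clean way to make that remark explicit.
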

 A refinement of the Young inequality is presented in \cite{KIt} as follows:
 \begin{align}\label{refi}
 a^\alpha b^{1-\alpha}\leq\alpha a+(1-\alpha)b-r_{0}(a^{\frac{1}{2}}-b^{\frac{1}{2}})^{2},
 \end{align}
 where $r_{0}=\min\{\alpha,1-\alpha\}$.\\
The following lemma known as generalized mixed schwarz inequality \cite{kit}.
\begin{lemma}\label{mix}
Let $T\in{\mathbb B}({\mathscr H})$ and $x, y\in {\mathscr H}$ be any vectors.\\
$\rm{(a)}$ If $0\leq\alpha\leq1$, then
\begin{align*}
 \mid\langle Tx,y\rangle\mid^{2}\leq \langle\mid T\mid^{2\alpha}x,x\rangle\langle\mid T^{*}\mid^{2(1-\alpha)}y,y\rangle,
\end{align*}
where $|T|=(T^*T)^{\frac{1}{2}}$ is the absolute value of $T$.\\
$\rm{(b)}$ If $f$, $g$ are nonnegative  continuous functions on $[0, \infty)$ which are satisfying the relation $f(t)g(t)=t\,\,(t\in [0, \infty))$, then
\begin{align*}
\mid \langle Tx, y \rangle \mid \leq \parallel f(\mid T \mid)x \parallel \parallel g(\mid T^{*} \mid)y \parallel
\end{align*}
 for all $x, y\in {\mathscr H}$.
\end{lemma}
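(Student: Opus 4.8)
The plan is to derive both parts from the polar decomposition of $T$ together with a single Cauchy--Schwarz step, establishing the functional form $(\mathrm b)$ first and obtaining $(\mathrm a)$ as the special case $f(t)=t^{\alpha}$, $g(t)=t^{1-\alpha}$. Write $T=U|T|$ with $U$ the partial isometry whose initial space is $\overline{\mathrm{ran}\,|T|}$ and whose final space is $\overline{\mathrm{ran}\,T}$; then $U^{*}U$ is the projection onto $\overline{\mathrm{ran}\,|T|}$, so that $U^{*}U|T|=|T|$, and $|T^{*}|=U|T|U^{*}$.

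For $(\mathrm b)$, the hypothesis $f(t)g(t)=t$ yields the operator identity $|T|=f(|T|)g(|T|)$, the two factors commuting as continuous functions of the self-adjoint operator $|T|$. Substituting into $\langle Tx,y\rangle=\langle U|T|x,y\rangle$ and transferring $f(|T|)$ onto the first slot gives
\begin{align*}
\langle Tx,y\rangle=\langle f(|T|)x,\,g(|T|)U^{*}y\rangle,
\end{align*}
so that Cauchy--Schwarz produces $|\langle Tx,y\rangle|\le\|f(|T|)x\|\,\|g(|T|)U^{*}y\|$. It then remains to prove the comparison $\|g(|T|)U^{*}y\|\le\|g(|T^{*}|)y\|$.

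This comparison is the one delicate point, since $g$ need not vanish at the origin and hence $g(|T^{*}|)\neq Ug(|T|)U^{*}$ in general. I would resolve it by writing $g^{2}(t)=g^{2}(0)+h(t)$ with $h(0)=0$; for such $h$ the intertwining is exact, $h(|T^{*}|)=Uh(|T|)U^{*}$ (approximate $h$ by polynomials without constant term and use $U^{*}U|T|=|T|$ repeatedly). A direct computation then gives
\begin{align*}
\langle g^{2}(|T^{*}|)y,y\rangle-\|g(|T|)U^{*}y\|^{2}=g^{2}(0)\big(\|y\|^{2}-\|U^{*}y\|^{2}\big)\ge0,
\end{align*}
the nonnegativity following because $U^{*}$ is a contraction; this proves $(\mathrm b)$. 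Taking $f(t)=t^{\alpha}$ and $g(t)=t^{1-\alpha}$ specializes $(\mathrm b)$ to $|\langle Tx,y\rangle|\le\||T|^{\alpha}x\|\,\||T^{*}|^{1-\alpha}y\|=\langle|T|^{2\alpha}x,x\rangle^{1/2}\langle|T^{*}|^{2(1-\alpha)}y,y\rangle^{1/2}$, and squaring yields $(\mathrm a)$. The main obstacle is exactly the constant-term bookkeeping in the $g(|T|)$ versus $g(|T^{*}|)$ comparison; everything else is the routine polar-decomposition-and-Cauchy--Schwarz argument.
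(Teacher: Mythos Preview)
Your argument is correct. The paper does not prove this lemma at all; it is quoted as the ``generalized mixed Schwarz inequality'' with a citation to Kittaneh \cite{kit}, so there is no in-paper proof to compare against. What you have written is essentially the standard proof from that source: polar decomposition $T=U|T|$, the factorisation $|T|=f(|T|)g(|T|)$, a single Cauchy--Schwarz step, and then the comparison $\|g(|T|)U^{*}y\|\le\|g(|T^{*}|)y\|$. Your handling of the latter --- splitting off the constant $g^{2}(0)$ so that the remaining piece $h$ satisfies $h(|T^{*}|)=Uh(|T|)U^{*}$ exactly, and absorbing the discrepancy into $g^{2}(0)(\|y\|^{2}-\|U^{*}y\|^{2})\ge0$ --- is the clean way to deal with the fact that $g$ need not vanish at $0$; the rest is routine. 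Deducing $(\mathrm a)$ from $(\mathrm b)$ via $f(t)=t^{\alpha}$, $g(t)=t^{1-\alpha}$ is also standard.
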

\begin{lemma}\cite{Ba}\label{9}
Let  $A\in {\mathbb B}({\mathscr H_1}(\Omega))$, $B\in {\mathbb B}({\mathscr H_2(\Omega), \mathscr H_1(\Omega)})$, $C\in {\mathbb B}({\mathscr H_1(\Omega),\mathscr H_2(\Omega)})$ and $D\in {\mathbb B}({\mathscr H_2}(\Omega))$. Then the following statements hold:\\
$(a)\,\,\textbf{ber}\left(\left[\begin{array}{cc}
 A&0\\
 0&D
 \end{array}\right]\right)$
  $\leq$ $ \max \{\textbf{ber}(A), \textbf{ber}(D)\};$
\\
\\
$(b)\,\,\textbf{ber}\left(\left[\begin{array}{cc}
 0&B\\
 C&0
 \end{array}\right]\right)$
 $\leq$ $ \frac{1}{2}(\|B\|+\|C\|).$
\end{lemma}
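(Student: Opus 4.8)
The plan is to work directly from the definition of the Berezin symbol on the direct-sum space $\mathscr H_1(\Omega)\oplus\mathscr H_2(\Omega)$, whose reproducing kernel at a point $\lambda$ is $\mathbf k_\lambda=k_\lambda^{(1)}\oplus k_\lambda^{(2)}$, where $k_\lambda^{(i)}$ denotes the reproducing kernel of $\mathscr H_i(\Omega)$. Consequently $\|\mathbf k_\lambda\|^2=\|k_\lambda^{(1)}\|^2+\|k_\lambda^{(2)}\|^2$ and the normalized kernel is $\hat{\mathbf k}_\lambda=\mathbf k_\lambda/\|\mathbf k_\lambda\|$. Setting $t=t(\lambda)=\|k_\lambda^{(1)}\|^2/\|\mathbf k_\lambda\|^2\in[0,1]$, the entire argument reduces to expanding $\langle T\hat{\mathbf k}_\lambda,\hat{\mathbf k}_\lambda\rangle$ block by block.

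For $(a)$, with $T=\left[\begin{smallmatrix}A&0\\0&D\end{smallmatrix}\right]$, I would expand the symbol as
\begin{align*}
\widetilde T(\lambda)=\frac{\langle Ak_\lambda^{(1)},k_\lambda^{(1)}\rangle+\langle Dk_\lambda^{(2)},k_\lambda^{(2)}\rangle}{\|\mathbf k_\lambda\|^2}=t\,\widetilde A(\lambda)+(1-t)\,\widetilde D(\lambda),
\end{align*}
so that $\widetilde T(\lambda)$ is a convex combination of $\widetilde A(\lambda)$ and $\widetilde D(\lambda)$. Taking moduli and using $|\widetilde A(\lambda)|\le\textbf{ber}(A)$, $|\widetilde D(\lambda)|\le\textbf{ber}(D)$ gives $|\widetilde T(\lambda)|\le t\,\textbf{ber}(A)+(1-t)\,\textbf{ber}(D)\le\max\{\textbf{ber}(A),\textbf{ber}(D)\}$ for every $\lambda$; taking the supremum over $\lambda\in\Omega$ yields part $(a)$.

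For $(b)$, with $T=\left[\begin{smallmatrix}0&B\\C&0\end{smallmatrix}\right]$, the same expansion gives
\begin{align*}
\widetilde T(\lambda)=\frac{\langle Bk_\lambda^{(2)},k_\lambda^{(1)}\rangle+\langle Ck_\lambda^{(1)},k_\lambda^{(2)}\rangle}{\|\mathbf k_\lambda\|^2}.
\end{align*}
Here I would apply the Cauchy–Schwarz inequality to each summand, bounding $|\langle Bk_\lambda^{(2)},k_\lambda^{(1)}\rangle|\le\|B\|\,\|k_\lambda^{(1)}\|\,\|k_\lambda^{(2)}\|$ and likewise for $C$, and then invoke the elementary AM–GM estimate $\|k_\lambda^{(1)}\|\,\|k_\lambda^{(2)}\|\le\tfrac12\big(\|k_\lambda^{(1)}\|^2+\|k_\lambda^{(2)}\|^2\big)=\tfrac12\|\mathbf k_\lambda\|^2$. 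This produces $|\widetilde T(\lambda)|\le\tfrac12(\|B\|+\|C\|)$ uniformly in $\lambda$, and taking the supremum proves $(b)$.

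The one genuinely delicate point is the very first step: pinning down the reproducing-kernel (hence normalized-kernel) structure of $\mathscr H_1(\Omega)\oplus\mathscr H_2(\Omega)$ and verifying that the Berezin symbol of a block matrix really does split into the block-diagonal convex combination above governed by the single weight $t(\lambda)$. Once this identity is secured, both inequalities are immediate: part $(a)$ is convexity of the modulus followed by the trivial bound of a convex combination by its maximum, and part $(b)$ is Cauchy–Schwarz combined with AM–GM. No appeal to the deeper tools (the mixed Schwarz inequality or the Young-type estimates) is required.
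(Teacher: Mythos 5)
Your argument is correct, and there is no in-paper proof to compare it against: the lemma is imported from \cite{Ba} without proof, so the only meaningful check is whether your blind argument is sound and consistent with how the paper actually uses the statement. It is, and it coincides with the standard proof of the cited result: for (a) the Berezin symbol of the diagonal block operator is the convex combination $t\,\widetilde A+(1-t)\,\widetilde D$, bounded by the maximum; for (b) Cauchy--Schwarz plus $\|k^{(1)}\|\,\|k^{(2)}\|\le\tfrac12\bigl(\|k^{(1)}\|^{2}+\|k^{(2)}\|^{2}\bigr)$ suffices. The one point needing care is exactly the one you flag as delicate: $\mathscr H_1(\Omega)\oplus\mathscr H_2(\Omega)$ is not canonically an RKHS on $\Omega$ with kernel $k^{(1)}_\lambda\oplus k^{(2)}_\lambda$ at a \emph{single} point $\lambda$. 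The convention this paper actually works with (see the proofs in Section 3) takes independent points $(\lambda_1,\lambda_2)\in\Omega_1\times\Omega_2$ and normalized kernels $\hat k_{(\lambda_1,\lambda_2)}$ proportional to $k_{\lambda_1}\oplus k_{\lambda_2}$, with $\textbf{ber}$ of a block operator defined as the supremum over such pairs. This costs you nothing: your estimates are uniform in the weight $t\in[0,1]$, so the identical computation with $t=t(\lambda_1,\lambda_2)=\|k_{\lambda_1}\|^{2}/\bigl(\|k_{\lambda_1}\|^{2}+\|k_{\lambda_2}\|^{2}\bigr)$ proves both parts, now with $\widetilde A(\lambda_1)$ and $\widetilde D(\lambda_2)$ evaluated at independent points. (Incidentally, under the alternative disjoint-union convention, where the kernel at a point of the first component is $k_{\lambda_1}\oplus 0$, part (a) sharpens to an equality; the inequality form stated here is what the product convention yields, which is a further sign that your reading of the setup is the intended one.)
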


The next lemma follows from the spectral theorem for positive operators and the Jensen inequality (see \cite{kit}).
\begin{lemma}\label{8}
(McCarthy inequality). Let $T\in{\mathbb B}({\mathscr H})$, $ T \geq 0$ and $x\in {\mathscr H}$ be a unit vector. Then\\
$(a)\,\, \langle Tx, x\rangle^{r} \leq  \langle T^{r}x, x\rangle$ for $ r\geq 1;$\\
$(b)\,\,\langle T ^{r}x, x\rangle  \leq  \langle Tx, x\rangle^{r}$ for $ 0<r\leq 1$.
\end{lemma}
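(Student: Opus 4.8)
The plan is to reduce both inequalities to a single application of Jensen's inequality against a scalar probability measure produced by the spectral theorem. Since $T \geq 0$, the spectral theorem provides a projection-valued measure $E$ supported on $\sigma(T) \subseteq [0, \infty)$ with $T = \int_{\sigma(T)} \lambda \, dE(\lambda)$. First I would fix the unit vector $x$ and define the scalar set function $\mu(S) = \langle E(S) x, x \rangle$. Because each $E(S)$ is an orthogonal projection and $E$ is countably additive, $\mu$ is a nonnegative countably additive measure, and the normalization $\mu(\sigma(T)) = \langle E(\sigma(T)) x, x \rangle = \langle x, x \rangle = 1$ (this is the only place the unit-vector hypothesis enters) shows that $\mu$ is a probability measure on $[0, \infty)$.

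The key step is then to rewrite both quantities as moments of $\mu$. Applying the continuous functional calculus gives
\begin{align*}
\langle Tx, x \rangle = \int_{\sigma(T)} \lambda \, d\mu(\lambda) \quad \text{and} \quad \langle T^r x, x \rangle = \int_{\sigma(T)} \lambda^r \, d\mu(\lambda),
\end{align*}
so that both parts of the statement become a comparison between $\left(\int \lambda \, d\mu\right)^r$ and $\int \lambda^r \, d\mu$.

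Finally I would invoke Jensen's inequality with the function $\phi(t) = t^r$ on $[0, \infty)$. For $r \geq 1$ this $\phi$ is convex, so Jensen yields $\phi\!\left(\int \lambda \, d\mu\right) \leq \int \phi(\lambda) \, d\mu$, which is precisely part $(a)$; for $0 < r \leq 1$ the function $\phi$ is concave, the inequality reverses, and one obtains part $(b)$. No step here is a genuine obstacle; the only points requiring care are confirming that $\mu$ is a probability measure (so that Jensen applies verbatim rather than needing a normalizing constant) and correctly tracking the direction of the inequality as the convexity of $t \mapsto t^r$ switches at $r = 1$.
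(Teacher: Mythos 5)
Your proof is correct and follows exactly the route the paper itself indicates for this lemma, which it states without detailed proof: "the spectral theorem for positive operators and the Jensen inequality" (with a citation to Kittaneh). Your write-up simply makes that sketch explicit --- forming the scalar probability measure $\mu(S)=\langle E(S)x,x\rangle$ from the spectral measure and applying Jensen's inequality to $t\mapsto t^{r}$, with the convexity/concavity switch at $r=1$ handled correctly.
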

Now, by applying these lemmas, we extend some Berezin number inequalities.
\begin{theorem}
Let $A, B, X\in {\mathbb B}({\mathscr H(\Omega)})$. Then\\
$(\rm i)\,\, \textbf{ber}^{r}(A^*XB)\leq\|X\|^{r}\textbf{ber}\left(\frac{1}{p}(A^*A)^{\frac{pr}{2}}+\frac{1}{q}(B^*B)^{\frac{qr}{2}}\right)$ for $r\geq0$ and $p,q>1$ with $\frac{1}{p}+\frac{1}{q}=1$ and $pr,qr\geq2$.\\
$(\rm ii)\,\,\textbf{ber}(A^*XB)\leq\frac{1}{2}\textbf{ber}(B^*|X|^{2\alpha}B+A^*|X^*|^{2(1-\alpha)}A)$ for every $0\leq\alpha\leq1$.
\end{theorem}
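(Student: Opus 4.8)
The plan is to prove both inequalities by the same two-step scheme: first bound the modulus of the Berezin symbol $\langle A^*XB\hat{k}_\lambda,\hat{k}_\lambda\rangle$ pointwise in $\lambda$, and then take the supremum over $\lambda\in\Omega$. The starting identity in each case is $\langle A^*XB\hat{k}_\lambda,\hat{k}_\lambda\rangle=\langle XB\hat{k}_\lambda,A\hat{k}_\lambda\rangle$, which moves $A$ and $B$ onto the two slots of an inner product so that a Schwarz-type estimate can be applied. A point worth recording at the end is that the operators appearing on the right-hand sides, namely $\frac{1}{p}(A^*A)^{pr/2}+\frac{1}{q}(B^*B)^{qr/2}$ in $(\rm i)$ and $B^*|X|^{2\alpha}B+A^*|X^*|^{2(1-\alpha)}A$ in $(\rm ii)$, are positive; hence their Berezin symbols are nonnegative and their Berezin numbers coincide with the suprema of the unmodulated symbols, which is exactly what the pointwise bounds produce.

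For part $(\rm ii)$ I would apply the generalized mixed Schwarz inequality of Lemma \ref{mix}$(\rm a)$ with $T=X$, $x=B\hat{k}_\lambda$ and $y=A\hat{k}_\lambda$, giving $|\langle XB\hat{k}_\lambda,A\hat{k}_\lambda\rangle|^2\leq\langle B^*|X|^{2\alpha}B\hat{k}_\lambda,\hat{k}_\lambda\rangle\,\langle A^*|X^*|^{2(1-\alpha)}A\hat{k}_\lambda,\hat{k}_\lambda\rangle$. Taking square roots and invoking the arithmetic--geometric mean inequality $\sqrt{st}\leq\frac{1}{2}(s+t)$ turns the product into the average $\frac{1}{2}\langle(B^*|X|^{2\alpha}B+A^*|X^*|^{2(1-\alpha)}A)\hat{k}_\lambda,\hat{k}_\lambda\rangle$; the supremum over $\lambda$ then yields the claim. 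This recovers the known inequality \eqref{1} at $\alpha=\frac12$.

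For part $(\rm i)$ I would instead bound $|\langle XB\hat{k}_\lambda,A\hat{k}_\lambda\rangle|$ by the ordinary Cauchy--Schwarz inequality together with the operator-norm estimate, obtaining $|\langle A^*XB\hat{k}_\lambda,\hat{k}_\lambda\rangle|\leq\|X\|\,\|A\hat{k}_\lambda\|\,\|B\hat{k}_\lambda\|=\|X\|\,\langle A^*A\hat{k}_\lambda,\hat{k}_\lambda\rangle^{1/2}\langle B^*B\hat{k}_\lambda,\hat{k}_\lambda\rangle^{1/2}$. Raising to the power $r$ and applying the Young inequality of Lemma \ref{3'}$(\rm b)$ with $a=\langle A^*A\hat{k}_\lambda,\hat{k}_\lambda\rangle^{r/2}$ and $b=\langle B^*B\hat{k}_\lambda,\hat{k}_\lambda\rangle^{r/2}$ gives the bound $\|X\|^r\big(\frac{1}{p}\langle A^*A\hat{k}_\lambda,\hat{k}_\lambda\rangle^{pr/2}+\frac{1}{q}\langle B^*B\hat{k}_\lambda,\hat{k}_\lambda\rangle^{qr/2}\big)$. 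Finally I would use the McCarthy inequality of Lemma \ref{8}$(\rm a)$: since $A^*A,B^*B\geq0$ and the hypotheses $pr\geq2$, $qr\geq2$ make the exponents $pr/2,qr/2\geq1$, we may pass from $\langle A^*A\hat{k}_\lambda,\hat{k}_\lambda\rangle^{pr/2}$ to $\langle(A^*A)^{pr/2}\hat{k}_\lambda,\hat{k}_\lambda\rangle$ and similarly for $B$, after which the supremum over $\lambda$ completes the proof.

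The calculations themselves are routine; the only real care is in bookkeeping the exponents. The role of the constraints $pr,qr\geq2$ is precisely to guarantee that the McCarthy step is legitimate, since it requires the exponent to be at least $1$, so the main thing to watch is that each inequality is applied with exponents in its valid range: Young's inequality with the conjugate pair $p,q$, and McCarthy's with $pr/2,qr/2\geq1$, and that monotonicity of $t\mapsto t^r$ is used correctly when raising the Cauchy--Schwarz bound to the $r$-th power.
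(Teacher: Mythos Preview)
Your proposal is correct and follows essentially the same approach as the paper: for part~(i) the paper also uses the identity $\langle A^*XB\hat{k}_\lambda,\hat{k}_\lambda\rangle=\langle XB\hat{k}_\lambda,A\hat{k}_\lambda\rangle$, then Cauchy--Schwarz with the operator-norm bound, Young's inequality (Lemma~\ref{3'}(b)), and McCarthy (Lemma~\ref{8}(a)) before taking the supremum; for part~(ii) it uses the mixed Schwarz inequality (Lemma~\ref{mix}) followed by AM--GM, exactly as you describe. Your additional remarks on the positivity of the right-hand operators and on the role of the hypotheses $pr,qr\geq2$ in legitimizing the McCarthy step are accurate and make the write-up slightly more explicit than the paper's.
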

\begin{proof}
If $\hat{k}_{\lambda}$ is the normalized reproducing kernel of ${\mathscr H}(\Omega)$, then
\begin{align*}
|\langle A^*XB\hat{k}_{\lambda},\hat{k}_{\lambda}\rangle|^r&=
|\langle XB\hat{k}_{\lambda},A\hat{k}_{\lambda}\rangle|^r\\&
\leq\|X\|^r\|A\hat{k}_{\lambda}\|^r\|B\hat{k}_{\lambda}\|^r \qquad\qquad (\textrm {by the Cauchy-Schwarz inequality})\\&
\leq\|X\|^r\langle A\hat{k}_{\lambda},A\hat{k}_{\lambda}\rangle^{\frac{r}{2}}\langle B\hat{k}_{\lambda},B\hat{k}_{\lambda}\rangle^{\frac{r}{2}}\\&
\leq\|X\|^{r}\left(\frac{1}{p}\langle A^*A\hat{k}_{\lambda},\hat{k}_{\lambda}\rangle^{\frac{pr}{2}}+\frac{1}{q}\langle B^*B\hat{k}_{\lambda},\hat{k}_{\lambda}\rangle^{\frac{qr}{2}}\right)\quad\quad (\textrm {by Lemma\,\,} \ref {3'})\\&
\leq\|X\|^{r}\left(\frac{1}{p}\langle (A^*A)^{\frac{pr}{2}}\hat{k}_{\lambda},\hat{k}_{\lambda}\rangle+\frac{1}{q}\langle (B^*B)^{\frac{qr}{2}}\hat{k}_{\lambda},\hat{k}_{\lambda}\rangle\right)\quad (\textrm {by Lemma\,\,}\ref{8})\\&
=\|X\|^{r}\left\langle \frac{1}{p}(A^*A)^{\frac{pr}{2}}+\frac{1}{q}(B^*B)^{\frac{qr}{2}}\hat{k}_{\lambda},\hat{k}_{\lambda}\right\rangle\\&
\leq\|X\|^r\textbf{ber}\left(\frac{1}{p}(A^*A)^{\frac{pr}{2}}+\frac{1}{q}(B^*B)^{\frac{qr}{2}}\right).
\end{align*}
Therefore
\begin{align*}
 \textbf{ber}^{r}(A^*XB)\leq\|X\|^{r}\textbf{ber}\left(\frac{1}{p}(A^*A)^{\frac{pr}{2}}+\frac{1}{q}(B^*B)^{\frac{qr}{2}}\right),
 \end{align*}
 and so we get the part $\rm (i)$.
 For the proof of the part $\rm(ii)$ we have
\begin{align*}
|\langle A^*XB\hat{k}_{\lambda},\hat{k}_{\lambda}\rangle|&=|\langle XB\hat{k}_{\lambda},A\hat{k}_{\lambda}\rangle|\\&
\leq\langle |X|^{2\alpha}B\hat{k}_{\lambda},B\hat{k}_{\lambda}\rangle^{\frac{1}{2}}\langle |X^*|^{2(1-\alpha)}A\hat{k}_{\lambda},A\hat{k}_{\lambda}\rangle^{\frac{1}{2}} \quad\quad (\textrm {by Lemma}\, \ref{mix})\\&
\leq\frac{1}{2}(\langle B^*|X|^{2\alpha}B\hat{k}_{\lambda},\hat{k}_{\lambda}\rangle+\langle A^*|X^*|^{2(1-\alpha)}A\hat{k}_{\lambda},\hat{k}_{\lambda}\rangle)\\&
 \qquad\qquad\qquad\qquad (\textrm {by the arithmetic-geometric mean inequality})\\&
=\frac{1}{2}(\langle B^*|X|^{2\alpha}B+A^*|X^*|^{2(1-\alpha)}A\hat{k}_{\lambda},\hat{k}_{\lambda}\rangle)\\&
\leq \frac{1}{2}\textbf{ber}(B^*|X|^{2\alpha}B+A^*|X^*|^{2(1-\alpha)}A).
\end{align*}
Now, the result follows by taking supremum on $\lambda\in \Omega$.
\end{proof}
\begin{theorem}\label{6}
Let $A, B, X, Y\in {\mathbb B}({\mathscr H(\Omega)})$. Then for every $0\leq\alpha\leq1$
{\footnotesize\begin{align}\label{5}
\textbf{ber}(A^*XB+B^*YA)\leq\frac{1}{2}\textbf{ber}(B^*|X|^{2\alpha}B+A^*|X^*|^{2(1-\alpha)}A+A^*|Y|^{2\alpha}A+B^*|Y^*|^{2(1-\alpha)}B).
\end{align}}
\begin{proof}
Applying Lemma \ref{mix} and the arithmetic-geometric mean inequality, for any $\hat{k}_{\lambda}\in {\mathscr H(\Omega)}$, we have
{\footnotesize\begin{align*}
|\langle &(A^*XB+B^*YA)\hat{k}_{\lambda},\hat{k}_{\lambda}\rangle|
\\&\leq|\langle A^*XB\hat{k}_{\lambda},\hat{k}_{\lambda}\rangle|+|\langle B^*YA\hat{k}_{\lambda},\hat{k}_{\lambda}\rangle|\\&
=|\langle XB\hat{k}_{\lambda},A\hat{k}_{\lambda}\rangle|+|\langle YA\hat{k}_{\lambda},B\hat{k}_{\lambda}\rangle|\\&
\leq\langle |X|^{2\alpha}B\hat{k}_{\lambda},B\hat{k}_{\lambda}\rangle^{\frac{1}{2}}\langle |X^*|^{2(1-\alpha)}A\hat{k}_{\lambda},A\hat{k}_{\lambda}\rangle^{\frac{1}{2}}+\langle |Y|^{2\alpha}A\hat{k}_{\lambda},A\hat{k}_{\lambda}\rangle^{\frac{1}{2}}\langle |Y^*|^{2(1-\alpha)}B\hat{k}_{\lambda},B\hat{k}_{\lambda}\rangle^{\frac{1}{2}}\\&
\leq\frac{1}{2}\left(\langle B^*|X|^{2\alpha}B\hat{k}_{\lambda},\hat{k}_{\lambda}\rangle+\langle A^*|X^*|^{2(1-\alpha)}A\hat{k}_{\lambda},\hat{k}_{\lambda}\rangle\right)\\&\,\,\,+\frac{1}{2}\left(\langle A^*|Y|^{2\alpha}A\widehat{k}_{\lambda},\widehat{k}_{\lambda}\rangle+\langle B^*|Y^*|^{2(1-\alpha)}B\hat{k}_{\lambda},\hat{k}_{\lambda}\rangle\right)\\&
=\frac{1}{2}\left(\langle (B^*|X|^{2\alpha}B+A^*|X^*|^{2(1-\alpha)}A+A^*|Y|^{2\alpha}A+B^*|Y^*|^{2(1-\alpha)}B)\hat{k}_{\lambda},\hat{k}_{\lambda}\rangle\right)\\&
\leq\frac{1}{2}\textbf{ber}(B^*|X|^{2\alpha}B+A^*|X^*|^{2(1-\alpha)}A+A^*|Y|^{2\alpha}A+B^*|Y^*|^{2(1-\alpha)}B).
\end{align*}}
Now, by taking supremum over $\lambda\in \Omega$, we get the desired inequality.

\end{proof}
\end{theorem}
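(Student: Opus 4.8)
The plan is to mimic the argument used for part (ii) of the previous theorem, applying the mixed Schwarz inequality separately to each of the two summands and then recombining everything into a single inner product. First I would fix a normalized reproducing kernel $\hat{k}_{\lambda}$ of $\mathscr{H}(\Omega)$ and use subadditivity of the modulus together with the triangle inequality to split
\[
|\langle (A^*XB+B^*YA)\hat{k}_{\lambda},\hat{k}_{\lambda}\rangle|
\leq |\langle A^*XB\hat{k}_{\lambda},\hat{k}_{\lambda}\rangle|
+ |\langle B^*YA\hat{k}_{\lambda},\hat{k}_{\lambda}\rangle|.
\]

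Next, moving the outer factors across the inner product via adjoints, I would rewrite the two terms as $|\langle XB\hat{k}_{\lambda},A\hat{k}_{\lambda}\rangle|$ and $|\langle YA\hat{k}_{\lambda},B\hat{k}_{\lambda}\rangle|$. To each I apply Lemma \ref{mix}(a). For the first term I take $T=X$, $x=B\hat{k}_{\lambda}$, $y=A\hat{k}_{\lambda}$, which yields the bound $\langle |X|^{2\alpha}B\hat{k}_{\lambda},B\hat{k}_{\lambda}\rangle^{\frac{1}{2}}\langle |X^*|^{2(1-\alpha)}A\hat{k}_{\lambda},A\hat{k}_{\lambda}\rangle^{\frac{1}{2}}$; for the second I take $T=Y$, $x=A\hat{k}_{\lambda}$, $y=B\hat{k}_{\lambda}$, giving the analogous bound $\langle |Y|^{2\alpha}A\hat{k}_{\lambda},A\hat{k}_{\lambda}\rangle^{\frac{1}{2}}\langle |Y^*|^{2(1-\alpha)}B\hat{k}_{\lambda},B\hat{k}_{\lambda}\rangle^{\frac{1}{2}}$.

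Then I would apply the arithmetic-geometric mean inequality $\sqrt{st}\leq\frac{1}{2}(s+t)$ to each of the two products, rewrite each factor of the form $\langle |X|^{2\alpha}B\hat{k}_{\lambda},B\hat{k}_{\lambda}\rangle$ as $\langle B^*|X|^{2\alpha}B\hat{k}_{\lambda},\hat{k}_{\lambda}\rangle$ (and similarly for the other three summands), and then collect all four terms inside a single inner product against $\hat{k}_{\lambda}$. Bounding this expression by the supremum defining the Berezin number and taking the supremum over $\lambda\in\Omega$ produces the claimed inequality.

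As for difficulty, there is no essential obstacle: the statement is a direct two-operator refinement of Theorem 2.1(ii), obtained by running the same chain of inequalities twice. The only point that requires genuine care is the correct assignment of the vectors $x$ and $y$ in the mixed Schwarz inequality for the $B^*YA$ term, so that $|Y|^{2\alpha}$ attaches to $A$ and $|Y^*|^{2(1-\alpha)}$ to $B$, matching the order written in the right-hand side of the theorem.
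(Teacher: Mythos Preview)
Your proposal is correct and follows exactly the same route as the paper's proof: split by the triangle inequality, apply the generalized mixed Schwarz inequality (Lemma~\ref{mix}) to each summand, use the arithmetic--geometric mean inequality, recombine into a single inner product, and take the supremum over $\lambda\in\Omega$. Even the care you flag about assigning $x=A\hat{k}_{\lambda}$, $y=B\hat{k}_{\lambda}$ in the $B^*YA$ term matches the paper's computation.
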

Inequality \eqref{5} yields several Berezin number inequalities as special cases. A sample of elementary inequalities are demonstrated in the following remarks.
\begin{remark}
By letting $\alpha=\frac{1}{2}$ in inequality \eqref{5}, we get the following inequalities:
\begin{align*}
\textbf{ber}(A^*XB+B^*YA)&\leq\frac{1}{2}\textbf{ber}(B^*|X|B+A^*|X^*|A+A^*|Y|A+B^*|Y^*|B)\\&
\leq\frac{1}{2}\textbf{ber}(B^*|X|B+A^*|X^*|A)+\frac{1}{2}\textbf{ber}(A^*|Y|A+B^*|Y^*|B).
\end{align*}
\end{remark}
\begin{remark}
Putting $\alpha=\frac{1}{2}$, $A=I$ and $X=Y=A$, in inequality \eqref{5}  we can obtain the following inequality.
\begin{align*}
\textbf{ber}(AB+B^*A)\leq\frac{1}{2}\textbf{ber}(|A|+|A^*|)+\frac{1}{2}\textbf{ber}(B^*(|A|+|A^*|)B).
\end{align*}
\end{remark}
In the next result we find an upper bound for power of the  Berezin number of $A^{\alpha}XB^{1-\alpha}$, in which  $0\leq\alpha\leq1$.
\begin{theorem}\label{11}
Suppose that $A,B,X\in{\mathbb B}({\mathscr H(\Omega)})$ such that $A, B$ are positive. Then
\begin{align}\label{10}
\textbf{ber}^{r}(A^{\alpha}XB^{1-\alpha})\leq\|X\|^{r}\left(\textbf{ber}(\alpha A^{r}+(1-\alpha)B^{r})-\inf_{\|\hat{k}_{\lambda}\|=1}\eta(\hat{k}_{\lambda})\right),
\end{align}
in which $\eta(\hat{k}_{\lambda})=r_{0}(\langle A^{r}\hat{k}_{\lambda},\hat{k}_{\lambda}\rangle^{\frac{1}{2}}-\langle B^{r}\hat{k}_{\lambda},\hat{k}_{\lambda}\rangle^{\frac{1}{2}})^{2}$, $r_{0}=\min\{\alpha,1-\alpha\}$, $r\geq 2$ and $0\leq\alpha\leq1$.
\end{theorem}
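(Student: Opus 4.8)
The plan is to bound the Berezin symbol $|\langle A^{\alpha}XB^{1-\alpha}\hat{k}_{\lambda},\hat{k}_{\lambda}\rangle|$ pointwise in $\lambda$ and take the supremum only at the very end, so that the refinement term $\eta$ survives. Since $A,B\geq 0$, the powers $A^{\alpha}$ and $B^{1-\alpha}$ are positive, hence self-adjoint, so I would first move $A^{\alpha}$ across the inner product and apply the Cauchy--Schwarz inequality together with $\|X\|$:
\[
|\langle A^{\alpha}XB^{1-\alpha}\hat{k}_{\lambda},\hat{k}_{\lambda}\rangle|=|\langle XB^{1-\alpha}\hat{k}_{\lambda},A^{\alpha}\hat{k}_{\lambda}\rangle|\leq\|X\|\,\langle A^{2\alpha}\hat{k}_{\lambda},\hat{k}_{\lambda}\rangle^{\frac12}\langle B^{2(1-\alpha)}\hat{k}_{\lambda},\hat{k}_{\lambda}\rangle^{\frac12},
\]
where I used $\|A^{\alpha}\hat{k}_{\lambda}\|^{2}=\langle A^{2\alpha}\hat{k}_{\lambda},\hat{k}_{\lambda}\rangle$ and likewise for $B$.

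The second step converts the fractional powers of $A$ and $B$ into powers of $A^{r}$ and $B^{r}$, which is where the hypothesis $r\geq 2$ enters. Raising the previous display to the power $r$ produces a factor $\langle A^{2\alpha}\hat{k}_{\lambda},\hat{k}_{\lambda}\rangle^{r/2}$. Writing $A^{2\alpha}=(A^{r})^{2\alpha/r}$ with $2\alpha/r\leq 1$ (which holds precisely because $\alpha\leq 1\leq r/2$), Lemma \ref{8}(b) gives $\langle A^{2\alpha}\hat{k}_{\lambda},\hat{k}_{\lambda}\rangle\leq\langle A^{r}\hat{k}_{\lambda},\hat{k}_{\lambda}\rangle^{2\alpha/r}$, and raising to the power $r/2$ yields $\langle A^{2\alpha}\hat{k}_{\lambda},\hat{k}_{\lambda}\rangle^{r/2}\leq\langle A^{r}\hat{k}_{\lambda},\hat{k}_{\lambda}\rangle^{\alpha}$. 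The analogous estimate with $2(1-\alpha)/r\leq 1$ handles the $B$-factor, so altogether
\[
|\langle A^{\alpha}XB^{1-\alpha}\hat{k}_{\lambda},\hat{k}_{\lambda}\rangle|^{r}\leq\|X\|^{r}\,\langle A^{r}\hat{k}_{\lambda},\hat{k}_{\lambda}\rangle^{\alpha}\langle B^{r}\hat{k}_{\lambda},\hat{k}_{\lambda}\rangle^{1-\alpha}.
\]

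For the final step I would apply the refined Young inequality \eqref{refi} with $a=\langle A^{r}\hat{k}_{\lambda},\hat{k}_{\lambda}\rangle$ and $b=\langle B^{r}\hat{k}_{\lambda},\hat{k}_{\lambda}\rangle$, which turns the right-hand side into $\|X\|^{r}(\langle(\alpha A^{r}+(1-\alpha)B^{r})\hat{k}_{\lambda},\hat{k}_{\lambda}\rangle-\eta(\hat{k}_{\lambda}))$, since $r_{0}(a^{1/2}-b^{1/2})^{2}$ is exactly $\eta(\hat{k}_{\lambda})$. Bounding $\eta(\hat{k}_{\lambda})\geq\inf_{\|\hat{k}_{\mu}\|=1}\eta(\hat{k}_{\mu})$ and then taking the supremum over $\lambda$ finishes the argument, where I use that $\alpha A^{r}+(1-\alpha)B^{r}$ is positive so that its Berezin symbol is nonnegative and $\sup_{\lambda}\langle(\alpha A^{r}+(1-\alpha)B^{r})\hat{k}_{\lambda},\hat{k}_{\lambda}\rangle=\textbf{ber}(\alpha A^{r}+(1-\alpha)B^{r})$.

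The main obstacle is the exponent bookkeeping in the second step: one must split the power $r$ so that the McCarthy inequality is applied in the correct direction (Lemma \ref{8}(b), not (a)), and this is exactly where $r\geq 2$ is indispensable, since it guarantees $2\alpha/r,2(1-\alpha)/r\leq 1$. A secondary subtlety is that the refinement term must be kept attached to each $\lambda$ until after Young's inequality is applied; replacing $\eta(\hat{k}_{\lambda})$ by its infimum before passing to the supremum is what legitimately detaches it from the Berezin number of the positive operator.
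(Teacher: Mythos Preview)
Your proposal is correct and follows essentially the same route as the paper's own proof: Cauchy--Schwarz to extract $\|X\|$, the McCarthy inequality (Lemma~\ref{8}(b)) to pass from $\langle A^{2\alpha}\hat{k}_{\lambda},\hat{k}_{\lambda}\rangle^{r/2}$ to $\langle A^{r}\hat{k}_{\lambda},\hat{k}_{\lambda}\rangle^{\alpha}$, then the refined Young inequality \eqref{refi}, and finally the supremum over $\lambda$. Your additional explanation of why $2\alpha/r\leq 1$ and $2(1-\alpha)/r\leq 1$ (and hence why part (b), not (a), of Lemma~\ref{8} is the relevant direction) makes explicit a point the paper leaves implicit, but the argument is the same.
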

\begin{proof}
Let $\hat{k}_{\lambda}$ be the normalized reproducing kernel of ${\mathscr H}(\Omega)$. Then
{\footnotesize\begin{align*}
| \langle A^{\alpha}XB^{1-\alpha}\hat{k}_{\lambda},\hat{k}_{\lambda}\rangle |^{r}&=
|\langle XB^{1-\alpha}\hat{k}_{\lambda},A^{\alpha}\hat{k}_{\lambda}\rangle|^r\\&
\leq\|x\|^r\|B^{1-\alpha}\hat{k}_{\lambda}\|^{r}\|A^{\alpha}\hat{k}_{\lambda}\|^{r} \quad(\textrm {by the Cauchy-Schwarz inequality})\\&
=\|X\|^r\langle B^{2(1-\alpha)}\hat{k}_{\lambda},\hat{k}_{\lambda}\rangle^{\frac{r}{2}}\langle A^{2\alpha}\hat{k}_{\lambda},\hat{k}_{\lambda}\rangle^{\frac{r}{2}}\\&
\leq\|X\|^r\langle A^{r}\hat{k}_{\lambda},\hat{k}_{\lambda}\rangle^{\alpha}\langle B^{r}\hat{k}_{\lambda},\hat{k}_{\lambda}\rangle^{1-\alpha}\qquad(\textrm {by Lemma\,\,}\ref{8})\\&
\leq\|X\|^r\left(\langle(\alpha A^{r}+(1-\alpha)B^{r})\hat{k}_{\lambda},\hat{k}_{\lambda}\rangle-r_{0}(\langle A^{r}\hat{k}_{\lambda},\hat{k}_{\lambda}\rangle^{\frac{1}{2}}-\langle B^{r}\hat{k}_{\lambda},\hat{k}_{\lambda}\rangle^{\frac{1}{2}})^{2}\right)
\\&
\qquad\qquad\qquad\qquad\qquad(\textrm {by \,\,}\eqref{refi})\\&
=\|X\|^r(\langle(\alpha A^{r}+(1-\alpha)B^{r})\hat{k}_{\lambda},\hat{k}_{\lambda}\rangle)-\|X\|^{r}r_{0}(\langle A^{r}\hat{k}_{\lambda},\hat{k}_{\lambda}\rangle^{\frac{1}{2}}-\langle B^{r}\hat{k}_{\lambda},\hat{k}_{\lambda}\rangle^{\frac{1}{2}})^{2}\\&
\leq\|X\|^r\left(\textbf{ber}(\alpha A^{r}+(1-\alpha)B^{r})-r_{0}(\langle A^{r}\hat{k}_{\lambda},\hat{k}_{\lambda}\rangle^{\frac{1}{2}}-\langle B^{r}\hat{k}_{\lambda},\hat{k}_{\lambda}\rangle^{\frac{1}{2}})^{2}\right).
\end{align*}}
Taking the supremum over $\lambda\in \Omega$, we deduce the desired result.
\end{proof}
\begin{remark}
Putting $A=B=I$ in inequality \eqref{10}, we get a generalization of the inequality \eqref{111}.
\end{remark}
The Heinz mean is defined as $H_{\alpha}(a,b)=\frac{a^{1-\alpha}b^\nu+a^\alpha b^{1-\alpha}}{2}$ for $a,b>0$ and $0\leq\alpha\leq1$. The function $H_{\alpha}$ is symmetric about the point $\nu={1\over2}$ and  $\sqrt{ab}\leq H_\alpha(a,b)\leq {a+b\over2}$
 for all $\alpha\in[0,1]$. For further information about the Heinz mean we refer the
reader to \cite{bakh, lashkari3} and references therein.
In the next theorem we can obtain an upper bound for the Berezin number involving power Heinz mean.
\begin{theorem}
Suppose that $A,B,X\in{\mathbb B}({\mathscr H(\Omega)})$ such that $A,B$ are positive. Then
\begin{align*}
\textbf{ber}^{r}\Big(\frac{A^\alpha XB^{1-\alpha}+A^{1-\alpha} XB^{\alpha}}{2}&\Big)\leq\frac{\|X\|^{r}}{2}\textbf{ber}(A^{r}+B^{r})\\&\leq\frac{\|X\|^{r}}{2}\textbf{ber}(\alpha A^{r}+(1-\alpha)B^{r})+\textbf{ber}((1-\alpha)A^{r}+\alpha B^{r}),
\end{align*}
in which $r\geq 2$ and $0\leq\alpha\leq1$.
\end{theorem}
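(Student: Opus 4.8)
The plan is to adapt the pointwise estimate from the proof of Theorem~\ref{11}, but to combine the two summands of the Heinz-type expression \emph{before} passing to the supremum; this is what yields the sharp middle term $\textbf{ber}(A^{r}+B^{r})$ rather than the weaker bound one would get by invoking Theorem~\ref{11} separately on each summand. Fix a normalized reproducing kernel $\hat{k}_{\lambda}$, write $a=\langle A^{r}\hat{k}_{\lambda},\hat{k}_{\lambda}\rangle$ and $b=\langle B^{r}\hat{k}_{\lambda},\hat{k}_{\lambda}\rangle$, and set $u=|\langle A^{\alpha}XB^{1-\alpha}\hat{k}_{\lambda},\hat{k}_{\lambda}\rangle|$ and $v=|\langle A^{1-\alpha}XB^{\alpha}\hat{k}_{\lambda},\hat{k}_{\lambda}\rangle|$. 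First I would use the triangle inequality together with the convexity of $t\mapsto t^{r}$ (legitimate since $r\geq 2\geq 1$) to obtain
\[
\Big|\Big\langle\tfrac{A^{\alpha}XB^{1-\alpha}+A^{1-\alpha}XB^{\alpha}}{2}\hat{k}_{\lambda},\hat{k}_{\lambda}\Big\rangle\Big|^{r}\leq\Big(\tfrac{u+v}{2}\Big)^{r}\leq\tfrac{1}{2}(u^{r}+v^{r}).
\]

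For each of $u$ and $v$ I would run exactly the Cauchy--Schwarz/McCarthy chain used in Theorem~\ref{11}: Cauchy--Schwarz gives $u\leq\|X\|\,\langle A^{2\alpha}\hat{k}_{\lambda},\hat{k}_{\lambda}\rangle^{1/2}\langle B^{2(1-\alpha)}\hat{k}_{\lambda},\hat{k}_{\lambda}\rangle^{1/2}$, and then, writing $A^{2\alpha}=(A^{r})^{2\alpha/r}$ and $B^{2(1-\alpha)}=(B^{r})^{2(1-\alpha)/r}$, McCarthy's inequality (Lemma~\ref{8}(b)) yields $\langle A^{2\alpha}\hat{k}_{\lambda},\hat{k}_{\lambda}\rangle\leq a^{2\alpha/r}$ and $\langle B^{2(1-\alpha)}\hat{k}_{\lambda},\hat{k}_{\lambda}\rangle\leq b^{2(1-\alpha)/r}$. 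Raising to the $r$th power gives the clean bounds $u^{r}\leq\|X\|^{r}a^{\alpha}b^{1-\alpha}$ and, symmetrically, $v^{r}\leq\|X\|^{r}a^{1-\alpha}b^{\alpha}$.

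Substituting these into the first display bounds the left-hand side by $\tfrac{\|X\|^{r}}{2}(a^{\alpha}b^{1-\alpha}+a^{1-\alpha}b^{\alpha})$. Now Young's inequality (Lemma~\ref{3'}(a)) gives $a^{\alpha}b^{1-\alpha}\leq\alpha a+(1-\alpha)b$ and $a^{1-\alpha}b^{\alpha}\leq(1-\alpha)a+\alpha b$, whose sum collapses to $a+b=\langle(A^{r}+B^{r})\hat{k}_{\lambda},\hat{k}_{\lambda}\rangle$. Taking the supremum over $\lambda\in\Omega$ then delivers the first inequality $\textbf{ber}^{r}(H_{\alpha}(A,B))\leq\tfrac{\|X\|^{r}}{2}\textbf{ber}(A^{r}+B^{r})$. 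The second inequality is purely formal: since $A^{r}+B^{r}=(\alpha A^{r}+(1-\alpha)B^{r})+((1-\alpha)A^{r}+\alpha B^{r})$, the subadditivity of the Berezin number (property (ii)) finishes it after multiplying through by $\tfrac{\|X\|^{r}}{2}$.

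I expect the only point requiring care to be the verification that the McCarthy exponents $2\alpha/r$ and $2(1-\alpha)/r$ both lie in $(0,1]$; this is precisely where the hypothesis $r\geq 2$ (together with $0\leq\alpha\leq 1$) is indispensable, since otherwise Lemma~\ref{8}(b) would not apply and the reduction of $\langle A^{2\alpha}\hat{k}_{\lambda},\hat{k}_{\lambda}\rangle$ to a power of $a$ would break down. Everything else is a routine assembly of Cauchy--Schwarz, convexity, Young's inequality, and subadditivity. (I also note a typographical slip in the displayed statement: the factor $\tfrac{\|X\|^{r}}{2}$ is evidently meant to multiply \emph{both} Berezin terms in the rightmost expression.)
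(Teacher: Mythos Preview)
Your proof is correct and follows essentially the same route as the paper's: triangle inequality plus convexity of $t\mapsto t^{r}$, then the pointwise Cauchy--Schwarz/McCarthy chain from Theorem~\ref{11} applied to each summand \emph{before} taking the supremum (so that the two Young bounds $\alpha a+(1-\alpha)b$ and $(1-\alpha)a+\alpha b$ collapse to $a+b$), followed by subadditivity of $\textbf{ber}$ for the final step. The only difference is cosmetic: the paper invokes Theorem~\ref{11} by reference, whereas you spell out the underlying McCarthy step explicitly and, in doing so, correctly isolate where the hypothesis $r\geq 2$ is actually used; your observation about the missing factor $\tfrac{\|X\|^{r}}{2}$ on the second Berezin term is also right---the same slip appears in the paper's displayed proof.
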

\begin{proof}
Using Theorem \ref{11} for $\hat{k}_{\lambda}$, which is the normalized reproducing kernel of ${\mathscr H}(\Omega)$ we have
\begin{align*}
&\left|\left\langle\frac{A^\alpha XB^{1-\alpha}+A^{1-\alpha} XB^{\alpha}}{2}\hat{k}_{\lambda},\hat{k}_{\lambda}\right\rangle\right|^{r}\\&\leq\left(\frac{|\langle A^\alpha XB^{1-\alpha}\hat{k}_{\lambda},\hat{k}_{\lambda}\rangle|+|\langle A^{1-\alpha} XB^{\alpha}\hat{k}_{\lambda},\hat{k}_{\lambda}\rangle|}{2}\right)^{r}\\&
\leq\frac{1}{2}(|\langle A^\alpha XB^{1-\alpha}\hat{k}_{\lambda},\hat{k}_{\lambda}\rangle|^{r}+|\langle A^{1-\alpha} XB^{\alpha}\hat{k}_{\lambda},\hat{k}_{\lambda}\rangle|^{r})\qquad(\textrm {by the convexity of\,\,} {f(t)=t^{r}})\\&
\leq\frac{\|X\|^{r}}{2}(\langle\alpha A^r+(1-\alpha)B^{r}\hat{k}_{\lambda},\hat{k}_{\lambda}\rangle+\langle(1-\alpha) A^r+\alpha B^{r}\hat{k}_{\lambda},\hat{k}_{\lambda}\rangle)\\&
=\frac{\|X\|^{r}}{2}\langle(A^r+B^r)\hat{k}_{\lambda},\hat{k}_{\lambda}\rangle\\&
\leq\frac{\|X\|^{r}}{2}\textbf{ber}(A^r+B^r).
\end{align*}
Taking supremum over $\lambda\in\Omega$, we get the first inequality.
For the second inequality, we have
\begin{align*}
\frac{\|X\|^{r}}{2}\textbf{ber}(A^r+B^r)&=\frac{\|X\|^{r}}{2}\textbf{ber}(\alpha A^r+(1-\alpha)B^r+(1-\alpha)A^r+\alpha B^r)\\&
\leq\frac{\|X\|^{r}}{2}\textbf{ber}(\alpha A^r+(1-\alpha)B^r)+\textbf{ber}((1-\alpha)A^r+\alpha B^r).
\end{align*}
\end{proof}

\section{Berezin number inequalities of off-diagonal matrices}
In this section, we obtain some inequalities involving powers of the Berezin number for the off-diagonal parts of $2\times2$ operator matrices.

\begin{theorem}
Let
$T=\left[\begin{array}{cc}
 0&B\\
 C&0
 \end{array}\right]\in {\mathbb B}({\mathscr H_2,\mathscr H_1})$ and  $f$, $g$ be nonnegative  continuous  functions on $[0, \infty)$ satisfying the relation $f(t)g(t)=t\,\,(t\in [0, \infty))$. Then
 {\footnotesize\begin{align}\label{7}
\textbf{ber}^{r}(T)\leq \max \left\{\textbf{ ber}\left( \frac{1}{p} f^{pr} (\mid C \mid) + \frac{1}{q} g^{qr} (\mid B^{*} \mid) \right), \textbf{ber}\left(\frac{1}{p} f^{pr}(\mid B \mid)+\frac{1}{q} g^{qr}(\mid C^{*}\mid) \right)\right\},
   \end{align}}
 in which  $r\geq 1$, $p \geq q >1$ such that $\frac{1}{p}+\frac{1}{q}=1$ and $pr \geq 2$.\\
\end{theorem}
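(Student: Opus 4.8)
The plan is to collapse the entire estimate onto a single positive block-diagonal operator and then read the maximum off from the diagonal bound of Lemma \ref{9}(a). The first thing I would do is compute the moduli of $T$ and $T^{*}$ through the block structure. Since $T^{*}T=\mathrm{diag}(C^{*}C,\,B^{*}B)$ and $TT^{*}=\mathrm{diag}(BB^{*},\,CC^{*})$, taking positive square roots gives $|T|=\mathrm{diag}(|C|,|B|)$ and $|T^{*}|=\mathrm{diag}(|B^{*}|,|C^{*}|)$. By the continuous functional calculus this diagonal splitting is inherited by the functions $f$ and $g$, so that $\frac1p f^{pr}(|T|)+\frac1q g^{qr}(|T^{*}|)$ is itself block-diagonal, with diagonal entries $\frac1p f^{pr}(|C|)+\frac1q g^{qr}(|B^{*}|)$ and $\frac1p f^{pr}(|B|)+\frac1q g^{qr}(|C^{*}|)$. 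This bookkeeping is exactly what manufactures the two expressions inside the stated maximum.

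Next I would estimate the Berezin symbol pointwise. Fixing a normalized reproducing kernel $\hat{k}_{\lambda}$ and applying the generalized mixed Schwarz inequality (Lemma \ref{mix}(b)) with $x=y=\hat{k}_{\lambda}$ yields $|\langle T\hat{k}_{\lambda},\hat{k}_{\lambda}\rangle|\leq\|f(|T|)\hat{k}_{\lambda}\|\,\|g(|T^{*}|)\hat{k}_{\lambda}\|$. Raising to the power $r\geq1$ and invoking the Young-type inequality of Lemma \ref{3'}(b) with the conjugate exponents $p,q$ bounds the right-hand side by $\frac1p\|f(|T|)\hat{k}_{\lambda}\|^{pr}+\frac1q\|g(|T^{*}|)\hat{k}_{\lambda}\|^{qr}$. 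I would then rewrite each summand as a power of an inner product, $\|f(|T|)\hat{k}_{\lambda}\|^{pr}=\langle f^{2}(|T|)\hat{k}_{\lambda},\hat{k}_{\lambda}\rangle^{pr/2}$ and similarly for the $g$-term, and apply the McCarthy inequality (Lemma \ref{8}(a)), which requires the exponents $pr/2$ and $qr/2$ to be at least $1$. Since $(f^{2})^{pr/2}=f^{pr}$ and $(g^{2})^{qr/2}=g^{qr}$, the outcome is the single pointwise bound $|\langle T\hat{k}_{\lambda},\hat{k}_{\lambda}\rangle|^{r}\leq\langle(\frac1p f^{pr}(|T|)+\frac1q g^{qr}(|T^{*}|))\hat{k}_{\lambda},\hat{k}_{\lambda}\rangle$.

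Taking the supremum over $\lambda\in\Omega$ then gives $\textbf{ber}^{r}(T)\leq\textbf{ber}\big(\frac1p f^{pr}(|T|)+\frac1q g^{qr}(|T^{*}|)\big)$, and the closing move is to feed the block-diagonal operator identified in the first step into Lemma \ref{9}(a), whose diagonal estimate converts this single Berezin number into the maximum of the two Berezin numbers appearing in \eqref{7}. The main obstacle I expect lies entirely at the functional calculus and McCarthy step: one must verify that $f(|T|)$ and $g(|T^{*}|)$ genuinely split diagonally so that their diagonal entries line up with the two terms inside the maximum, and one must control the exponent conditions so that McCarthy is applied in the correct direction (exponent $\geq1$) on \emph{both} summands. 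The $f$-summand uses $pr\geq2$ directly; the $g$-summand instead needs $qr\geq2$, and since $p\geq q$ with $\frac1p+\frac1q=1$ forces $q\leq2\leq p$, this is the smaller of the two exponents and the delicate condition that must be checked under the stated hypotheses. Once these points are settled, the remaining inequalities are routine applications of the quoted lemmas.
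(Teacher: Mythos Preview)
Your proposal is correct and follows essentially the same route as the paper: mixed Schwarz (Lemma \ref{mix}(b)), then Young (Lemma \ref{3'}(b)), then McCarthy (Lemma \ref{8}(a)) after rewriting the norms as inner products, together with the block-diagonal computation of $|T|=\mathrm{diag}(|C|,|B|)$ and $|T^{*}|=\mathrm{diag}(|B^{*}|,|C^{*}|)$, and finally Lemma \ref{9}(a) to extract the maximum. Your flag on the $qr\geq 2$ condition is apt---the paper applies Lemma \ref{8}(a) to the $g$-term without comment, so the stated hypothesis $pr\geq 2$ (with $p\geq q$, hence $qr\leq pr$) is indeed not quite sufficient and should really read $qr\geq 2$; this is an oversight in the theorem's hypotheses rather than a defect in your argument.
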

\begin{proof}
For any $(\lambda_{1},\lambda_{2})\in \Omega_{1}\times\Omega_{2}$, let $\hat{k}_{(\lambda_{1},\lambda_{2})}=\left[\begin{array}{cc}
 k_{\lambda_{1}}\\
 k_{\lambda_{2}}
 \end{array}\right]$ be the normalized reproducing kernel in $\mathscr H(\Omega_{1})\oplus\mathscr H(\Omega_{2})$. Then
 {\scriptsize\begin{align*}
 &\mid \langle T\hat{k}(\lambda_{1},\lambda_{2}),\hat{k}(\lambda_{1},\lambda_{2}) \rangle \mid^{r}
 \\&\leq \parallel f\left(\mid T \mid\right)\hat{k}(\lambda_{1},\lambda_{2})\parallel^{r} \parallel g\left(\mid T^{*}\mid \right)\hat{k}(\lambda_{1},\lambda_{2})\parallel ^{r}
    \qquad ( \normalsize\textrm{by Lemma \ref{mix}(b)})
  \\&= \langle f^{2}(\mid T \mid)\hat{k}(\lambda_{1},\lambda_{2}),\hat{k}(\lambda_{1},\lambda_{2})\rangle^{\frac{r}{2}} \langle g^{2}(\mid T^{*} \mid)\hat{k}(\lambda_{1},\lambda_{2}),\hat{k}(\lambda_{1},\lambda_{2})\rangle^{\frac{r}{2}}
 \\&
 \leq \frac{1}{p}\left\langle f^{2}\left(
 \left[\begin{array}{cc}
 \mid C \mid & 0\\
 0 & \mid B \mid
 \end{array}\right]
 \right)
 \hat{k}(\lambda_{1},\lambda_{2}),\hat{k}(\lambda_{1},\lambda_{2})\right\rangle ^{\frac{pr}{2}}+\frac{1}{q} \left\langle g^{2}\left(
 \left[\begin{array}{cc}
 \mid B^{*} \mid & 0\\
 0 & \mid C^{*} \mid
 \end{array}\right]
 \right)
 \hat{k}(\lambda_{1},\lambda_{2}),\hat{k}(\lambda_{1},\lambda_{2})\right\rangle ^{\frac{qr}{2}}  \\&     \qquad \qquad \qquad     \qquad (\normalsize\textrm{ by Lemma \ref{3'}(b)})
 \\&
 \leq \frac{1}{p}\left\langle \left[\begin{array}{cc}
 f^{pr} \mid C \mid & 0\\
 0 & f^{pr} \mid B \mid
 \end{array}\right]
\hat{k}(\lambda_{1},\lambda_{2}),\hat{k}(\lambda_{1},\lambda_{2})\right\rangle +\frac{1}{q} \left\langle \left[\begin{array}{cc}
 g^{qr} \mid B^{*} \mid & 0\\
 0 & g^{qr} \mid C^{*} \mid
 \end{array}\right]
 \hat{k}(\lambda_{1},\lambda_{2}),\hat{k}(\lambda_{1},\lambda_{2})\right\rangle\\&\qquad   \qquad \qquad \qquad             ( \normalsize\textrm{by Lemma \ref{8}(a)})
 \\&
 =\left\langle \left[\begin{array}{cc}
 \frac{1}{p} f^{pr} (\mid C \mid) +\frac{1}{q} g^{qr} (\mid B^{*}\mid) & 0\\
 0 & \frac{1}{p}f^{pr} (\mid B \mid)+\frac{1}{q} g^{qr}(\mid C^{*} \mid)
 \end{array}\right]
 \hat{k}(\lambda_{1},\lambda_{2}),\hat{k}(\lambda_{1},\lambda_{2})\right\rangle.
  \end{align*}}
 Hence
 {\scriptsize\begin{align*}
 \mid \langle T\hat{k}(\lambda_{1},\lambda_{2})&,\hat{k}(\lambda_{1},\lambda_{2}) \rangle\mid^{r} \\&\leq \textbf{ber}\left( \left\langle \left[\begin{array}{cc}
 \frac{1}{p} f^{pr} (\mid C \mid) +\frac{1}{q} g^{qr} (\mid B^{*}\mid) & 0\\
 0 & \frac{1}{p}f^{pr} (\mid B \mid)+\frac{1}{q} g^{qr}(\mid C^{*} \mid)
 \end{array}\right]
 \hat{k}(\lambda_{1},\lambda_{2}),\hat{k}(\lambda_{1},\lambda_{2})\right\rangle\right) .
 \end{align*}}
  Now, applying the definition of Berezin number and Lemma \ref{9}(a), we have
 {\footnotesize\begin{align*}
\textbf{ber}^{r}(T)\leq \max \left\{ \textbf{ber}\left( \frac{1}{p} f^{pr} (\mid C \mid) + \frac{1}{q} g^{qr} (\mid B^{*} \mid) \right), \textbf{ber}\left(\frac{1}{p} f^{pr}(\mid B \mid)+\frac{1}{q} g^{qr}(\mid C^{*}\mid) \right)\right\}.
 \end{align*}}

\end{proof}
Inequality \eqref{7} induces the following inequality.
\begin{corollary}
Let
$T=\left[\begin{array}{cc}
 0&B\\
 C&0
 \end{array}\right]\in {\mathbb B}({\mathscr H_2,\mathscr H_1})$. Then
{\scriptsize\begin{align*}
\textbf{ber}^{r}(T)\leq \frac{1}{2}\max\{\textbf{ber}( \mid C \mid^{2r\alpha}+\mid B^{*} \mid^{2r(1-\alpha)}),\textbf{ber}( \mid B \mid^{2r\alpha}+\mid C^{*} \mid^{2r(1-\alpha)}) \}
 \end{align*}}
  for any $r\geq 1$ and $0\leq\alpha\leq 1$.
\end{corollary}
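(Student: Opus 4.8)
The plan is to obtain the corollary as a direct specialization of inequality \eqref{7}. The key observation is that the two pieces of the Heinz-type splitting, with exponents $2r\alpha$ and $2r(1-\alpha)$, should be produced by choosing $f(t)=t^{\alpha}$ and $g(t)=t^{1-\alpha}$, while the overall prefactor $\frac{1}{2}$ in front of the maximum should arise from taking $p=q=2$, so that $\frac{1}{p}=\frac{1}{q}=\frac{1}{2}$.

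First I would check that the proposed functions satisfy the hypotheses of the theorem. For $0\le\alpha\le1$ the functions $f(t)=t^{\alpha}$ and $g(t)=t^{1-\alpha}$ are nonnegative and continuous on $[0,\infty)$, and they satisfy $f(t)g(t)=t^{\alpha}t^{1-\alpha}=t$ for all $t\ge0$, as required by the theorem. I would also confirm that the parameter choice $p=q=2$ is admissible: it satisfies $p\ge q>1$ and $\frac{1}{p}+\frac{1}{q}=1$, while the condition $pr=2r\ge2$ holds precisely when $r\ge1$, which is exactly the standing hypothesis of the corollary.

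Next I would substitute into \eqref{7}. With $p=q=2$ the expressions $f^{pr}$ and $g^{qr}$ become $f^{2r}(t)=(t^{\alpha})^{2r}=t^{2r\alpha}$ and $g^{2r}(t)=(t^{1-\alpha})^{2r}=t^{2r(1-\alpha)}$, so that the first entry reads $\frac{1}{p}f^{pr}(|C|)+\frac{1}{q}g^{qr}(|B^{*}|)=\frac{1}{2}\bigl(|C|^{2r\alpha}+|B^{*}|^{2r(1-\alpha)}\bigr)$, and likewise the second entry becomes $\frac{1}{2}\bigl(|B|^{2r\alpha}+|C^{*}|^{2r(1-\alpha)}\bigr)$. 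Applying the homogeneity property (i) of the Berezin number, namely $\textbf{ber}(\tfrac{1}{2}S)=\tfrac{1}{2}\textbf{ber}(S)$, pulls the factor $\frac{1}{2}$ out of each Berezin term, and since $\frac{1}{2}>0$ it then factors out of the maximum, giving
\[
\textbf{ber}^{r}(T)\leq \frac{1}{2}\max\{\textbf{ber}(|C|^{2r\alpha}+|B^{*}|^{2r(1-\alpha)}),\textbf{ber}(|B|^{2r\alpha}+|C^{*}|^{2r(1-\alpha)})\},
\]
which is the asserted inequality.

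I do not anticipate a genuine obstacle here, since the argument is a clean specialization rather than a new estimate. The only points requiring a moment's care are verifying that $p=q=2$ meets all the constraints of \eqref{7} (in particular that $pr\ge2$ reduces exactly to $r\ge1$) and carrying out the exponent bookkeeping $f^{pr}(t)=(t^{\alpha})^{2r}=t^{2r\alpha}$ consistently, so that the chosen functions really do generate the symmetric pair of exponents appearing in the statement.
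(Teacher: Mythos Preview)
Your proposal is correct and is exactly the approach taken in the paper: it obtains the corollary by specializing inequality \eqref{7} with $f(t)=t^{\alpha}$, $g(t)=t^{1-\alpha}$ and $p=q=2$. Your additional verification that these choices satisfy the hypotheses (in particular that $pr\ge 2$ reduces to $r\ge 1$) and the exponent bookkeeping are fine and merely make explicit what the paper leaves implicit.
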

\begin{proof}
Letting $f(t)=t^{\alpha}$, $g(t)=t^{1-\alpha}$ and $p=q=2$ in inequality \eqref{7}, we get the desired inequality.
\end{proof}

\begin{theorem}
 Let
 $T_{i}=\left[\begin{array}{cc}
 0&B_{i}\\
 C_{i}&0
 \end{array}\right]
 \in {\mathbb B}({\mathscr H_2(\Omega)\oplus\mathscr H_1(\Omega)})$ for any $i=1, 2,\cdots,n$. Then
 {\tiny\begin{align}
 \textbf{ber}_{p}^{p}(T_{1}, T_{2},\cdots,T_{n})\leq \max\left\{\textbf{ber}\left( \sum_{i=1}^{n} \alpha \mid C_{i}\mid^{p}+(1-\alpha)\mid B_{i}^{*}\mid^{p}\right), \textbf{ber}\left( \sum_{i=1}^{n} \alpha \mid B_{i}\mid^{p}+(1-\alpha)\mid C_{i}^{*}\mid^{p}\right)\right\}
 \end{align}}
 for $0\leq \alpha\leq 1 $ and $p\geq 2$.
 \end{theorem}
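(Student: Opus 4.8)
The plan is to reduce the inequality to a single-index estimate for each $T_i$ and then assemble the pieces using the block structure together with Lemma~\ref{9}(a). First I would record the absolute values of the off-diagonal blocks. A direct computation gives
\[
T_i^*T_i=\begin{pmatrix} C_i^*C_i & 0\\ 0 & B_i^*B_i\end{pmatrix},\qquad
T_iT_i^*=\begin{pmatrix} B_iB_i^* & 0\\ 0 & C_iC_i^*\end{pmatrix},
\]
so that $|T_i|=\mathrm{diag}(|C_i|,|B_i|)$ and $|T_i^*|=\mathrm{diag}(|B_i^*|,|C_i^*|)$, whence $|T_i|^p=\mathrm{diag}(|C_i|^p,|B_i|^p)$ and $|T_i^*|^p=\mathrm{diag}(|B_i^*|^p,|C_i^*|^p)$ for every $p$. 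Writing $\hat k=\hat k_{(\lambda_1,\lambda_2)}$ for the normalized reproducing kernel of $\mathscr H(\Omega_1)\oplus\mathscr H(\Omega_2)$, the quantity to be controlled is $\sum_{i=1}^n|\langle T_i\hat k,\hat k\rangle|^p$.

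The core step is the pointwise bound
\[
|\langle T_i\hat k,\hat k\rangle|^p\le\big\langle(\alpha|T_i|^p+(1-\alpha)|T_i^*|^p)\hat k,\hat k\big\rangle
\]
for each $i$. To obtain it I would start from the generalized mixed Schwarz inequality Lemma~\ref{mix}(a) with parameter $\alpha$, giving $|\langle T_i\hat k,\hat k\rangle|^2\le\langle|T_i|^{2\alpha}\hat k,\hat k\rangle\,\langle|T_i^*|^{2(1-\alpha)}\hat k,\hat k\rangle$. Applying the reverse McCarthy inequality Lemma~\ref{8}(b) (with exponents $\alpha$ and $1-\alpha$) to the two factors moves the fractional exponents outward, yielding $|\langle T_i\hat k,\hat k\rangle|^2\le\langle|T_i|^2\hat k,\hat k\rangle^{\alpha}\langle|T_i^*|^2\hat k,\hat k\rangle^{1-\alpha}$. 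Raising to the power $p/2$ and then invoking the monotone McCarthy inequality Lemma~\ref{8}(a) with $r=p/2\ge1$ (this is precisely where the hypothesis $p\ge2$ enters) replaces the inner exponent $2$ by $p$, so that $|\langle T_i\hat k,\hat k\rangle|^p\le\langle|T_i|^p\hat k,\hat k\rangle^{\alpha}\langle|T_i^*|^p\hat k,\hat k\rangle^{1-\alpha}$. A single application of the Young inequality Lemma~\ref{3'}(a) to the right-hand side then produces the claimed affine combination.

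Finally I would sum the core bound over $i=1,\dots,n$ and use linearity of the inner product to get $\sum_{i=1}^n|\langle T_i\hat k,\hat k\rangle|^p\le\langle S\hat k,\hat k\rangle$, where $S=\sum_{i=1}^n(\alpha|T_i|^p+(1-\alpha)|T_i^*|^p)$ is, by the block computation above, the diagonal operator with entries $\sum_i(\alpha|C_i|^p+(1-\alpha)|B_i^*|^p)$ and $\sum_i(\alpha|B_i|^p+(1-\alpha)|C_i^*|^p)$. Taking the supremum over $(\lambda_1,\lambda_2)$ turns the left side into $\textbf{ber}_p^p(T_1,\dots,T_n)$ and bounds the right side by $\textbf{ber}(S)$; Lemma~\ref{9}(a) then dominates $\textbf{ber}(S)$ by the maximum of the Berezin numbers of its two diagonal blocks, which is exactly the asserted inequality.

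The main obstacle is the bookkeeping in the core step. The mixed Schwarz inequality naturally produces the fractional exponents $2\alpha$ and $2(1-\alpha)$ inside the symbols, whereas the target requires the full power $p$ on $|C_i|,|B_i^*|,\dots$ together with the clean weights $\alpha,1-\alpha$. Reconciling these forces one to use both directions of McCarthy's inequality — the $0<r\le1$ direction to move the fractional exponents outward, and the $r\ge1$ direction (valid only because $p\ge2$) to raise the inner power from $2$ to $p$ — applied in the correct order. Getting this interplay right, rather than the block algebra or the final summation, is the delicate part.
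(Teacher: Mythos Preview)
Your proof is correct and follows the same route as the paper: mixed Schwarz (Lemma~\ref{mix}(a)), both directions of McCarthy (Lemma~\ref{8}), Young's inequality (Lemma~\ref{3'}(a)), and then the block-diagonal estimate via Lemma~\ref{9}(a). The only cosmetic difference is that the paper applies Lemma~\ref{8}(a) before Lemma~\ref{8}(b) (passing through $\langle|T_i|^{p\alpha}\hat k,\hat k\rangle$ rather than $\langle|T_i|^{2}\hat k,\hat k\rangle^{\alpha}$), but both orderings yield the same intermediate bound $\langle|T_i|^{p}\hat k,\hat k\rangle^{\alpha}\langle|T_i^{*}|^{p}\hat k,\hat k\rangle^{1-\alpha}$.
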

  \begin{proof}
 For any $(\lambda_{1},\lambda_{2})\in \Omega_{1}\times\Omega_{2}$, let $\hat{k}_{(\lambda_{1},\lambda_{2})}=\left[\begin{array}{cc}
 k_{\lambda_{1}}\\
 k_{\lambda_{2}}
 \end{array}\right]$ be the normalized reproducing kernel in $\mathscr H(\Omega_{1})\oplus\mathscr H(\Omega_{2})$. Then
 {\footnotesize\begin{align*}
 &\sum_{i=1}^{n}\mid\langle T_{i}\hat{k}_{(\lambda_{1},\lambda_{2})},\hat{k}_{(\lambda_{1},\lambda_{2})}\rangle\mid^{p}
 \\&=\sum_{i=1}^{n}(\mid\langle T_{i}\hat{k}_{(\lambda_{1},\lambda_{2})},\hat{k}_{(\lambda_{1},\lambda_{2})}\rangle\mid^{2})^{\frac{p}{2}}\\&
 \leq \sum_{i=1}^{n}(\langle\mid T_{i}\mid^{2\alpha}\hat{k}_{(\lambda_{1},\lambda_{2})},\hat{k}_{(\lambda_{1},\lambda_{2})}\rangle \langle \mid T_{i}^{*}\mid^{2(1-\alpha)}\hat{k}_{(\lambda_{1},\lambda_{2})},\hat{k}_{(\lambda_{1},\lambda_{2})}\rangle)^{\frac{p}{2}}   \qquad \normalsize{(\textrm {by Lemma  \ref{mix}(a)})}\\&
 \leq \sum_{i=1}^{n}\langle \mid T_{i}\mid^{p\alpha}\hat{k}_{(\lambda_{1},\lambda_{2})},\hat{k}_{(\lambda_{1},\lambda_{2})}\rangle\langle \mid T_{i}^{*}\mid^{p(1-\alpha)}\hat{k}_{(\lambda_{1},\lambda_{2})},\hat{k}_{(\lambda_{1},\lambda_{2})}\rangle
  \qquad\qquad (\normalsize\textrm {by Lemma \ref{8}(a)})     \\&
 \leq \sum_{i=1}^{n}\langle \mid T_{i}\mid^{p}\hat{k}_{(\lambda_{1},\lambda_{2})},\hat{k}_{(\lambda_{1},\lambda_{2})}\rangle^{\alpha}\langle\mid T_{i}^{*}\mid^{p}\hat{k}_{(\lambda_{1},\lambda_{2})},\hat{k}_{(\lambda_{1},\lambda_{2})}\rangle^{1-\alpha} \qquad  \qquad (\normalsize\textrm {by Lemma   \ref{8}(b)})\\&
 \leq\sum_{i=1}^{n}(\alpha \langle\mid T_{i}\mid^{p}\hat{k}_{(\lambda_{1},\lambda_{2})},\hat{k}_{(\lambda_{1},\lambda_{2})}\rangle+(1-\alpha)\langle \mid T_{i}^{*}\mid^{p}\hat{k}_{(\lambda_{1},\lambda_{2})},\hat{k}_{(\lambda_{1},\lambda_{2})}\rangle)
 \qquad (\normalsize\textrm {by Lemma \ref{3'}(a)})  \\&
 =\sum_{i=1}^{n}\left(
 \alpha\left\langle
 \left[\begin{array}{cc}
 \mid C_{i}\mid^{p}&0\\
 0&\mid B_{i}\mid^{p}
 \end{array}\right]
 \hat{k}_{(\lambda_{1},\lambda_{2})},\hat{k}_{(\lambda_{1},\lambda_{2})}\right\rangle + (1-\alpha)\left\langle
 \left[\begin{array}{cc}
 \mid B_{i}^{*}\mid^{p}&0\\
 0&\mid C_{i}^{*}\mid^{p}
 \end{array}\right]
 \hat{k}_{(\lambda_{1},\lambda_{2})},\hat{k}_{(\lambda_{1},\lambda_{2})}\right\rangle
\right)
\\&
=\sum_{i=1}^{n} \left\langle
\left[\begin{array}{cc}
 \alpha\mid C_{i}\mid^{p}+(1-\alpha)\mid B_{i}^{*}\mid^{p}&0\\
 0&\alpha \mid B_{i}\mid^{p}+(1-\alpha)\mid C_{i}^{*}\mid^{p}
 \end{array}\right]
\hat{k}_{(\lambda_{1},\lambda_{2})},\hat{k}_{(\lambda_{1},\lambda_{2})}\right\rangle\\&
 =\left\langle
 \left[\begin{array}{cc}
 \sum_{i=1}^{n}\alpha\mid C_{i}\mid^{p}+(1-\alpha)\mid B_{i}^{*}\mid^{p}&0\\
 0& \sum_{i=1}^{n}\alpha \mid B_{i}\mid^{p}+(1-\alpha)\mid C_{i}^{*}\mid^{p}
 \end{array}\right]
\hat{k}_{(\lambda_{1},\lambda_{2})},\hat{k}_{(\lambda_{1},\lambda_{2})}\right\rangle.
 \end{align*}}
 Therefore
 {\footnotesize\begin{align*}
 \sum_{i=1}^{n}\mid&\langle T_{i}\hat{k}_{(\lambda_{1},\lambda_{2})},\hat{k}_{(\lambda_{1},\lambda_{2})}\rangle\mid^{p}\\&
 \leq
 \textbf{ber}\left\langle
 \left[\begin{array}{cc}
 \sum_{i=1}^{n}\alpha\mid C_{i}\mid^{p}+(1-\alpha)\mid B_{i}^{*}\mid^{p}&0\\
 0& \sum_{i=1}^{n}\alpha \mid B_{i}\mid^{p}+(1-\alpha)\mid C_{i}^{*}\mid^{p}
 \end{array}\right]
\hat{k}_{(\lambda_{1},\lambda_{2})},\hat{k}_{(\lambda_{1},\lambda_{2})}\right\rangle.
 \end{align*}}
 By the definition of the Berezin number and Lemma \ref{9}(a), we get
 {\footnotesize\begin{align*}
 \textbf{ber}_{p}^{p}(T_{1}, T_{2},&\cdots,T_{n})\\&\leq \max\left\{\textbf{ber}\left( \sum_{i=1}^{n} \alpha \mid C_{i}\mid^{p}+(1-\alpha)\mid B_{i}^{*}\mid^{p}\right),\textbf{ber}\left( \sum_{i=1}^{n} \alpha \mid B_{i}\mid^{p}+(1-\alpha)\mid C_{i}^{*}\mid^{p}\right)\right\}.
 \end{align*}}
 \end{proof}
Now, we would like to estimate the Berezin number for matrix $\left[\begin{array}{cc}
  A&B\\
 C&D
 \end{array}\right].$
  \begin{proposition}
 Let $ T=\left[\begin{array}{cc}
  A&0\\
 0&D
 \end{array}\right] \in {\mathbb B}({\mathscr H_{1}(\Omega)}\oplus {\mathscr H_{2}(\Omega)})$. Then
 \begin{align}\label{14}
 \textbf{ber}^{r}(T)\leq \frac{1}{2}\max\{ \textbf{ber}(|A|^{r}+|A^{*}|^{r}),\textbf{ber}(|D|^{r}+|D^{*}|^{r})\}
 \end{align}
 for $r\geq 1$.
 \end{proposition}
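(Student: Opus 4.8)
The plan is to estimate the Berezin symbol of $T$ pointwise at a normalized reproducing kernel and then pass to the supremum, following the same scheme as in the preceding off-diagonal results but specializing the generalized mixed Schwarz inequality to $\alpha=\tfrac12$. Fix $(\lambda_1,\lambda_2)\in\Omega_1\times\Omega_2$ and let $\hat k$ denote the normalized reproducing kernel of $\mathscr H(\Omega_1)\oplus\mathscr H(\Omega_2)$ at $(\lambda_1,\lambda_2)$, so that $\hat k$ is a unit vector and Lemma \ref{8} applies.

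The core is a short pointwise chain. First, Lemma \ref{mix}(a) with $\alpha=\tfrac12$ yields $|\langle T\hat k,\hat k\rangle|^2\leq\langle|T|\hat k,\hat k\rangle\langle|T^*|\hat k,\hat k\rangle$; raising both sides to the power $r/2$ gives
\begin{align*}
|\langle T\hat k,\hat k\rangle|^{r}\leq\langle|T|\hat k,\hat k\rangle^{r/2}\langle|T^*|\hat k,\hat k\rangle^{r/2}.
\end{align*}
Second, the arithmetic--geometric mean inequality applied to the two nonnegative factors produces
\begin{align*}
|\langle T\hat k,\hat k\rangle|^{r}\leq\tfrac12\bigl(\langle|T|\hat k,\hat k\rangle^{r}+\langle|T^*|\hat k,\hat k\rangle^{r}\bigr).
\end{align*}
Third, since $r\geq1$ and $|T|,|T^*|\geq0$, the McCarthy inequality (Lemma \ref{8}(a)) gives $\langle|T|\hat k,\hat k\rangle^{r}\leq\langle|T|^{r}\hat k,\hat k\rangle$ and the analogous bound for $|T^*|$, producing $\tfrac12\langle(|T|^{r}+|T^*|^{r})\hat k,\hat k\rangle$. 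Fourth, the block-diagonal structure $|T|=\mathrm{diag}(|A|,|D|)$ and $|T^*|=\mathrm{diag}(|A^*|,|D^*|)$ identifies $|T|^{r}+|T^*|^{r}$ with $\mathrm{diag}(|A|^{r}+|A^*|^{r},\,|D|^{r}+|D^*|^{r})$.

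Taking the supremum over $(\lambda_1,\lambda_2)$ turns the pointwise estimate into $\textbf{ber}^{r}(T)\leq\tfrac12\,\textbf{ber}\bigl(\mathrm{diag}(|A|^{r}+|A^*|^{r},|D|^{r}+|D^*|^{r})\bigr)$, and a final application of Lemma \ref{9}(a) to this diagonal operator matrix bounds its Berezin number by the maximum of the Berezin numbers of the two diagonal blocks, which is exactly \eqref{14}. I do not expect a genuine obstacle, as the argument is a routine assembly of tools already established. The one point requiring care is the order of operations: one must raise to the $r$-th power \emph{before} invoking the arithmetic--geometric mean inequality and McCarthy's inequality, since the latter requires the exponent $r\geq1$ acting on a single positive operator, and reversing the order would fail to land on the clean combination $|T|^{r}+|T^*|^{r}$ needed to apply Lemma \ref{9}(a).
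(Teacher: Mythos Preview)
Your proof is correct and follows essentially the same route as the paper's: mixed Schwarz with $\alpha=\tfrac12$, AM--GM, McCarthy, block-diagonal identification, and finally Lemma \ref{9}(a). The only cosmetic difference is that the paper applies AM--GM \emph{before} raising to the $r$-th power (via convexity of $t\mapsto t^{r}$) rather than after, so your closing caveat that the ordering is essential is unwarranted---both orderings lead to the same bound.
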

 \begin{proof}
 Let $\hat{k}_{(\lambda_{1},\lambda_{2})}=\left[\begin{array}{cc}
 k_{\lambda_{1}}\\
 k_{\lambda_{2}}
 \end{array}\right]$ be the normalized reproducing kernel of $\mathscr H(\Omega_{1})\oplus\mathscr H(\Omega_{2})$. Then
 {\footnotesize\begin{align*}
 |\langle T&\hat{k}_{(\lambda_{1},\lambda_{2})},\hat{k}_{(\lambda_{1},\lambda_{2})}\rangle|\\&
 \leq \langle |T|\hat{k}_{(\lambda_{1},\lambda_{2})},\hat{k}_{(\lambda_{1},\lambda_{2})}\rangle^{\frac{1}{2}}\langle |T^{*}|\hat{k}_{(\lambda_{1},\lambda_{2})},\hat{k}_{(\lambda_{1},\lambda_{2})}\rangle^{\frac{1}{2}}\\&
 \leq \frac{1}{2}\left\langle \left[\begin{array}{cc}
 \mid A\mid&0\\
 0&\mid D \mid
 \end{array}\right]\hat{k}_{(\lambda_{1},\lambda_{2})},\hat{k}_{(\lambda_{1},\lambda_{2})} \right\rangle+\frac{1}{2}\left\langle \left[\begin{array}{cc}
 \mid A^{*}\mid&0\\
 0&\mid D^{*} \mid
 \end{array}\right]\hat{k}_{(\lambda_{1},\lambda_{2})},\hat{k}_{(\lambda_{1},\lambda_{2})} \right\rangle\\&
 \leq \left(\frac{1}{2}\left\langle \left[\begin{array}{cc}
 \mid A\mid&0\\
 0&\mid D \mid
 \end{array}\right]\hat{k}_{(\lambda_{1},\lambda_{2})},\hat{k}_{(\lambda_{1},\lambda_{2})} \right\rangle^{r}+\frac{1}{2}\left\langle \left[\begin{array}{cc}
 \mid A^{*}\mid&0\\
 0&\mid D^{*} \mid
 \end{array}\right]\hat{k}_{(\lambda_{1},\lambda_{2})},\hat{k}_{(\lambda_{1},\lambda_{2})} \right\rangle^{r}\right)^{\frac{1}{r}}\\&
 \leq \left(\frac{1}{2}\left\langle \left[\begin{array}{cc}
 \mid A\mid^{r}&0\\
  0&\mid D \mid^{r}
 \end{array}\right]\hat{k}_{(\lambda_{1},\lambda_{2})},\hat{k}_{(\lambda_{1},\lambda_{2})} \right\rangle+\frac{1}{2}\left\langle \left[\begin{array}{cc}
 \mid A^{*}\mid^{r}&0\\
 0&\mid D^{*} \mid^{r}
 \end{array}\right]\hat{k}_{(\lambda_{1},\lambda_{2})},\hat{k}_{(\lambda_{1},\lambda_{2})} \right\rangle\right)^{\frac{1}{r}}\\&
 =\left(
 \left\langle \left[\begin{array}{cc}
 \frac{1}{2}(\mid A\mid^{r}+|A^{*}|^{r})&0\\
 0&\frac{1}{2}(\mid D \mid^{r}+|D^{*}|^{r})
 \end{array}\right]\hat{k}_{(\lambda_{1},\lambda_{2})},\hat{k}_{(\lambda_{1},\lambda_{2})} \right\rangle\right)^{\frac{1}{r}}.
 \end{align*}}
 Thus
 \begin{align*}
 |\langle T\hat{k}_{(\lambda_{1},\lambda_{2})},\hat{k}_{(\lambda_{1},\lambda_{2})}\rangle |^{r}\leq \left\langle \left[\begin{array}{cc}
 \frac{1}{2}(\mid A\mid^{r}+|A^{*}|^{r})&0\\
 0&\frac{1}{2}(\mid D \mid^{r}+|D^{*}|^{r})
 \end{array}\right]\hat{k}_{(\lambda_{1},\lambda_{2})},\hat{k}_{(\lambda_{1},\lambda_{2})} \right\rangle.
 \end{align*}
 Therefore
 \begin{align*}
 \textbf{ber}^{r}(T)\leq \frac{1}{2}\max\{\textbf{ber}(|A|^{r}+|A^{*}|^{r}),\textbf{ber}(|D|^{r}+|D^{*}|^{r})\}.
 \end{align*}
 \end{proof}
 The following corollary deduces from inequalities \eqref{7} and \eqref{14} directly.
  \begin{corollary}
 Let  $T=\left[\begin{array}{cc}
  A&B\\
 C&D
 \end{array}\right]$ with $A, B, C, D\in {\mathbb B}({\mathscr H})$. Then
 {\footnotesize\begin{align*}
 \textbf{ber}(T)\leq \frac{1}{2}\max\{\textbf{ber}(|C| +| B^{*}|),\textbf{ber}(| B| + |C^{*}|)\}+\frac{1}{2}\max\{\textbf{ber}(|A| +|A^{*}|),\textbf{ber}(|D|+ |D^{*}|) \}.
 \end{align*}}
 In particular,
 \begin{align*}
  \textbf{ber}\left(\left[\begin{array}{cc}
  A&B\\
 B&A
 \end{array}\right]\right)
 \leq \frac{1}{2}(\textbf{ber}(|A|+ |A^{*}|)+\textbf{ber}(|B|+|B^{*}|)).\\
 \end{align*}
 \end{corollary}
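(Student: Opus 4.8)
The plan is to decompose $T$ into its diagonal and off-diagonal parts and then bound each part using the two inequalities already established. First I would write
\begin{align*}
T=\left[\begin{array}{cc} A&B\\ C&D \end{array}\right]=\left[\begin{array}{cc} A&0\\ 0&D \end{array}\right]+\left[\begin{array}{cc} 0&B\\ C&0 \end{array}\right],
\end{align*}
and apply the subadditivity of the Berezin number (property $(\rm ii)$ in the introduction) to obtain
\begin{align*}
\textbf{ber}(T)\leq\textbf{ber}\left(\left[\begin{array}{cc} A&0\\ 0&D \end{array}\right]\right)+\textbf{ber}\left(\left[\begin{array}{cc} 0&B\\ C&0 \end{array}\right]\right).
\end{align*}

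Next, I would estimate the off-diagonal summand by invoking the corollary induced by \eqref{7} with the choices $r=1$ and $\alpha=\frac12$, which gives
\begin{align*}
\textbf{ber}\left(\left[\begin{array}{cc} 0&B\\ C&0 \end{array}\right]\right)\leq\frac12\max\{\textbf{ber}(|C|+|B^{*}|),\textbf{ber}(|B|+|C^{*}|)\}.
\end{align*}
For the diagonal summand I would apply inequality \eqref{14} with $r=1$, yielding
\begin{align*}
\textbf{ber}\left(\left[\begin{array}{cc} A&0\\ 0&D \end{array}\right]\right)\leq\frac12\max\{\textbf{ber}(|A|+|A^{*}|),\textbf{ber}(|D|+|D^{*}|)\}.
\end{align*}
Summing these two estimates produces exactly the asserted inequality. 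For the special case, I would substitute $C=B$ and $D=A$, so that $|C|=|B|$ and $|C^{*}|=|B^{*}|$; then each of the two maxima reduces to the maximum of a quantity with itself and collapses, and the bound becomes $\frac12(\textbf{ber}(|A|+|A^{*}|)+\textbf{ber}(|B|+|B^{*}|))$, as claimed.

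Since both estimates for the summands are already available, there is essentially no analytic obstacle here; the proof is a direct assembly. The only points demanding care are the correct specialization of the two cited inequalities to $r=1$ (and $\alpha=\frac12$ in the off-diagonal case) and the verification that subadditivity of $\textbf{ber}$ applies to the block decomposition on $\mathscr H(\Omega)\oplus\mathscr H(\Omega)$, where the normalized reproducing kernel takes the block form $\hat{k}_{(\lambda_{1},\lambda_{2})}$ used in the earlier proofs.
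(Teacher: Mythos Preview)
Your proposal is correct and follows exactly the approach the paper intends: the paper simply states that the corollary ``deduces from inequalities \eqref{7} and \eqref{14} directly,'' and your decomposition into diagonal and off-diagonal parts together with subadditivity of $\textbf{ber}$ is precisely how those two inequalities are combined. The specializations $r=1$, $\alpha=\tfrac12$ and the substitution $C=B$, $D=A$ for the particular case are handled correctly.
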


\bigskip
\bibliographystyle{amsplain}

\end{document}